\DeclareMathOperator*{\argmin}{arg\,min}
\numberwithin{equation}{section}
\newcommand{\mc}[1]{\mathcal{#1}}
\newcommand{\MCF}{\mathcal{F}}
\newcommand{\mbA}{\mathbb{A}}
\newcommand{\EE}{\mathbb{E}}
\newcommand{\PP}{\mathbb{P}}
\newcommand{\RR}{\mathbb{R}}
\newcommand{\NN}{\mathbb{N}}
\newcommand{\ltwonorm}[1]{\left\lVert#1\right\rVert_2}
\newcommand{\half}{\frac{1}{2}}
\newcommand{\eps}{\epsilon}
\newcommand{\abs}[1]{\left|#1\right|}
\newcommand{\average}[1]{\left\langle#1\right\rangle}
\newcommand{\ud}{\,\mathrm{d}}
\newcommand{\alphain}{\alpha^{i,n}}
\newcommand{\oon}{\frac{1}{N}}
\begin{document}
   	\title{Deep Fictitious Play for Stochastic Differential Games\thanks{Received date, and accepted date (The correct dates will be entered by the editor).}}
   	
   	%For each author, make a block with the following macros:
   	
   	\author{Ruimeng Hu\thanks{Department of Statistics, Columbia University, New York, NY, 10027-4690. Current position: Department of Mathematics and Department of Statistics and Applied Probability, University of California, Santa Barbara, CA, 93106-3080, rhu@ucsb.edu. RH was partially supported by the NSF grant DMS-1953035.}}

   	\pagestyle{myheadings} \markboth{Deep fictitious play for stochastic differential games}{Ruimeng Hu} \maketitle
   	
   	\begin{abstract}
   			In this paper, we apply the idea of fictitious play to design deep neural networks (DNNs), and develop deep learning theory and algorithms for computing the Nash equilibrium of asymmetric $N$-player non-zero-sum stochastic differential games, for which we refer as \emph{deep fictitious play}, a multi-stage learning process. Specifically at each stage, we propose the strategy of letting individual player optimize her own payoff subject to the other players' previous actions, equivalent to solve $N$ decoupled stochastic control optimization problems,
   			which are approximated by DNNs. Therefore, the fictitious play strategy leads to a structure consisting of $N$ DNNs, which only communicate at the end of each stage. The resulted deep learning algorithm based on fictitious play is scalable, parallel and model-free, {\it i.e.}, using GPU parallelization, it can be applied to any $N$-player stochastic differential game with different symmetries and heterogeneities ({\it e.g.}, existence of major players). We illustrate the performance of the deep learning algorithm by comparing to the closed-form solution of the linear quadratic game. Moreover, we prove the convergence of fictitious play  under appropriate assumptions, and verify that the convergent limit forms an open-loop Nash equilibrium. We also discuss the extensions to other strategies designed upon fictitious play and closed-loop Nash equilibrium in the end.
   	\end{abstract}
   	\begin{keywords}  Stochastic differential game, fictitious play, deep learning, Nash equilibrium
   	\end{keywords}
   	
   	\begin{AMS} 91A15, 91B50, 91A26, 68T20, 60G99
   	\end{AMS}

\section{Introduction}\label{sec:intro} 
In stochastic differential games, a Nash equilibrium refers to strategies by which no player has an incentive to deviate. Finding a Nash equilibrium is one of the core problems in noncooperative game theory, however, due to the notorious intractability of $N$-player game, the computation of the Nash equilibrium has been shown extremely time-consuming and memory demanding, especially for large $N$ \cite{DaGoPa:2009}. On the other hand, a rich literature on game theory has been developed to study consequences of strategies on interactions between a large group of rational ``agents'', {\it e.g.}, system risk caused by inter-bank borrowing and lending, price impacts imposed by agents' optimal liquidation, and market price from monopolistic competition. This makes it crucial to develop efficient theory and fast algorithms for computing the Nash equilibrium of $N$-player stochastic differential games. 

Deep neural networks with many layers have been recently shown to do a great job in artificial intelligence ({\it e.g.}, \cite{Be:09, LeBeHi:15} ). The idea behind is to use compositions of simple functions to approximate complicated ones, and there are approximation theorems showing that a wide class of functions on compact subsets can be approximated by a single hidden layer neural network ({\it e.g.}, \cite{Pinkus:99}). This brings a possibility of solving a high-dimensional system using deep neural networks, and in fact, these techniques have been successfully applied to solve stochastic control problems \cite{HaE:16, deep:2018, deep2:2018}.

In this paper, we propose to build deep neural networks by using strategies of fictitious play, and develop parallelizable deep learning algorithms for computing the Nash equilibrium of asymmetric $N$-player non-zero-sum stochastic differential games. We consider a stochastic differential game with $N$ players, and each player $i \in \mathcal{I} :=  \{1, 2, \ldots, N\}$ has a state process $X_t^i \in \RR^d$ and takes an action $\alpha_t^i$ in the control set $A \subset \RR^k$. The dynamics of the controlled state process $X_\cdot^i$ on $[0,T]$ are given by
\begin{equation}\label{def:Xt:general}
\ud X_t^i = b^i(t, \bm{X}_t, \bm{\alpha}_t) \ud t + \sigma^i(t, \bm{X}_t, \bm{\alpha}_t) \ud W_t^i + \sigma^0(t, \bm{X}_t, \bm{\alpha}_t) \ud W_t^0, \quad X_0^i = x^i, \quad i \in \mc{I},
\end{equation}
where $\bm{W} :=[W^0, W^1,\ldots, W^N]$ are $N+1$ $m$-dimensional independent Brownian motions, %on the filtered probability space $(\Omega, \MCF, \mathbb{F} = (\MCF_t)_{0 \leq t \leq T}, \PP)$
$(b^i, \sigma^i)$ are deterministic functions: $[0,T] \times \RR^{d\times N} \times A^N \hookrightarrow \RR^d \times \RR^{d\times m}$. The $N$ dynamics are coupled since all private states $\bm{X}_t = [X_t^1, \ldots, X_t^N]$ and all the controls\footnote {Although in the literature of math finance, one usually models $b^i$ and $\sigma^i$ to only depend on player $i$'s own action, but it is common in literature of economics that player $i$'s private state is also influenced by others' actions, {\it e.g.}, $\alpha_t^i$ is a priced set by companies and $X_t^i$ is the production quantity. To be general, we include this feature in our model, which yields \eqref{def:Xt:general}.}  $\bm{\alpha}_t = [\alpha_t^1, \ldots, \alpha_t^N]$ affect the drifts $b^i$ and diffusions $\sigma^i$.

Each player's control $\alpha_t^i$ lives in the space $\mathbb{A} = \mathbb{H}^2_T(A)$ of progressively measurable $A$-valued processes satisfying the integrability condition:
\begin{equation}\label{def:admissible}
\EE[\int_0^T \abs{\alpha_t^i}^2 \ud t] < \infty.
\end{equation}
Using the strategy $\bm{\alpha} \in \mbA^N$,  the cost associated to player $i$ is of the form:
\begin{equation}\label{eq:cost}
J^i(\bm{\alpha}) := \EE\left[\int_0^T f^i(t, \bm X_t, \bm\alpha_t) \ud t + g^i(\bm X_T)\right],
	\end{equation}
where the running cost $f^i: [0,T] \times \RR^{d \times N}\times A^N \hookrightarrow \RR$ and terminal cost $g^i: \RR^{d\times N} \hookrightarrow \RR$ are deterministic measurable functions.

In solving stochastic differential games, the notion of optimality of common interest is the Nash equilibrium. A set of strategies $\bm{\alpha}^\ast = (\alpha^{1,\ast}, \ldots, \alpha^{N,\ast}) \in \mbA^N$ is called a Nash equilibrium if
\begin{equation}\label{def:Nash}
\forall i \in \mc{I} \text{ and } \beta^i \in \mbA, \quad J^i(\bm{\alpha}^\ast) \leq J^i(\beta^i, \bm{\alpha}^{-i,\ast}),
\end{equation}
where $\bm{\alpha}^{-i,\ast}$ represents strategies of players other than the $i$-th one
\begin{equation*}
\bm{\alpha}^{-i,\ast} := [\alpha^{1,\ast}, \ldots, \alpha^{i-1,\ast}, \alpha^{i+1,\ast}, \ldots, \alpha^{N,\ast}] \in \mbA^{N-1}.
\end{equation*}
In fact, depending on the space where one searches for actions (the information structure available to the players), the types of equilibria include open-loop ($\bm{W}_{[0,t]}$),  closed-loop ($\bm{X}_{[0,t]}$), and closed-loop in feedback form ($\bm{X}_t$). We start with the setup \eqref{def:Nash} which corresponds to the open-loop case. Theoretically, it is more tractable, due to the indirect nature ({\it i.e.} player $i$ will not change his strategy when player $j$'s strategy changes because player $i$ can not observe or feel the change). Practically, there are applications falling into this framework, for instance, the prisoner's dilemma from game theory. This is the scenario that when two people get arrested and investigated, they are in solitary confinements and can not communicate with each other, nor observe the other's choice. In this case, it is reasonable to assume that $\alpha_t^i$ does not depend on the past decisions $\bm \alpha_{[0,t)}$ nor the players' states $\bm X_{[0,t]}$ as these information is not available under this framework. The generalization of deep learning theory for closed-loop cases will be discussed in Section~\ref{sec:close:loop}.

An alternative method of solving $N$-player stochastic differential games is via mean-field games, introduced by Lasry and Lions in \cite{LaLi1:2006,LaLi2:2006,LaLi:2007} and by Huang, Malham\'{e} and Caines in \cite{HuMaCa:06,HuCaMa:07}. The idea is to approximate the Nash equilibrium by the solution of mean field equilibrium (the formal limit of $N\rightarrow \infty$) under mild conditions \cite{CaDe:13}, which leads to an approximation error of order $N^{-1/(d+4)}$ assuming that the players are indistinguishable, {\it i.e.}, all coefficients $(b^i, \sigma^i, f^i, g^i)$ are free of $i$. We refer to the books \cite{CaDe1:17,CaDe2:17} and the references therein for further background on mean-field games. However, beyond the case of a continuum of infinitesimal agents with or without major players, the mean-field equilibrium may not be a good approximation in general. In addition, the mean-field game often exhibits multiple equilibria, some of which do not correspond to the limit of $N$-player game as $N\rightarrow \infty$, {\it e.g.}, in the optimal stopping games \cite{NuMaTa:18}.  Moreover, when the number of players is of middle size ({\it e.g.}, $N \sim 50$), the approximation error made by the mean-field equilibrium is large while direct solvers based on forward-backward stochastic differential equations (FBSDEs) or on partial differential equations (PDEs) are still computationally unaffordable. Therefore, it is demanding to develop new theory and algorithms for solving the $N$-player game. 

The idea proposed in this paper is natural and motivated by the fictitious play, a learning process in game theory firstly introduced by Brown in the static case \cite{Br:49, Br:51} and recently adapted to the  mean field case by Cardaliaguet \cite{CaHa:17,BrCa:18} and coauthors. In the fictitious play, after some arbitrary initial moves at the first stage, the players myopically choose their best responses against the empirical strategy distribution of others' action at every subsequent stage. It is hoped that such a learning process will converge and lead to a Nash equilibrium. In fact, Robinson \cite{Ro:51} showed this holds for zero-sum games, and Miyazawa \cite{Mi:61} extended it to $2\times 2$ games. However, Shapley's famous $3\times 3$ counter-example \cite{Sh:64} indicates that this is not always true. Since then, many attempts are made to identify classes of games where the global convergence holds  \cite{MiRo:91,MoSh:96,MoSh2:96,HoMoSe:98, CrAn:03, Be:05, HoSa:02}, and where the process breaks down \cite{Jo:93,MoSe:96,FoYo:98,KrSj:98}, to name a few.

Based on fictitious play, we propose a deep learning theory and algorithm for computing the open-loop Nash equilibria. Unlike closed-loop strategies of feedback form, which can be reformulated as the solution to $N$-coupled Hamilton-Jacobi-Bellman (HJB) equations by dynamic programming principle (DPP), open-loop strategies are usually identified through FBSDEs. The existence of explicit solutions to both equations highly depends on the symmetry of the problem, in particular,  for most cases where explicit solutions are available, the players are statistically identical. Therefore, an efficient and accurate numerical scheme is crucial for solving such FBSDEs. Traditional ways run into the technical difficulty of {the curse of dimensionality}, thus are not feasible when the dimensionality goes beyond 5. Observing impressive results solved by deep learning on various challenging problems \cite{Be:09,KrSuHi:12,LeBeHi:15}, we shall use deep neural networks to overcome the curse of dimensionality for moderately large $N$ and asymmetric games. We first boil down the game into $N$ stochastic control subproblems, which are conditionally independent given past play at each stage. Since we first focus on open-loop equilbria (as opposed to closed-loop ones) in each subproblem, the strategies are considered as general progressively measurable processes (as opposed to functions of $(t, \bm{X}_t)$). Therefore, without the feedback effects, one can design a deep neural network to solve stochastic control subproblems individually. The control at each time step is approximated by a feed-forward subnetwork, whose inputs are initial states $\bm{X}_0$ and noises $\bm{W}_{[0,t]}$ in lieu of the definition of open-loop equlibria. For player $i$'s control problem, $\bm{X}^{-i}$ is generated using strategies from past, {\it i.e.}, considered as fixed while player $i$ optimizes herself.

\medskip
\noindent{\bf Main contribution.} The contribution of deep fictitious play is three-fold. Firstly, our algorithm is scalable: in each round of play, the $N$ subproblems can be solved in parallel, which can be accelerated by the feature of multi-GPU. Secondly, 
we propose a deep neural network for solving general stochastic control problem where strategies are general processes instead of feed-back form. In lack of DPP, algorithms from reinforcement learning are no longer available. We approximate the optimal control directly in contrast to approximating value functions \cite{Po:07}. Thirdly, the algorithm can be applied to asymmetric games, as for each player, there is a corresponding neural network.

\medskip
\noindent{\bf Related literature.} 
Most literature in deep learning and reinforcement learning algorithms in stochastic control problems uses DPP with which, the problem can be solved backwardly, {\it i.e.}, to find the optimal control at the terminal time, and then decide the previous decision. 
Among them, let me mention the recent works \cite{deep:2018, deep2:2018}, which approximate the optimal policy by neural networks in the spirit of deep reinforcement learning, and the approximated optimal policy is obtained in a backward manner. While in our algorithm, we stack these subnetworks together to form a deep network and train them simultaneously. In fact, our structure is inspired by Han and E \cite{HaE:16}. The difference is that they feed the network with $X_t$ seeking for feedback-form controls, while we feed the initial states $X_0$ and noises $W_{[0,t]}$ for each player's network, seeking for open-loop Nash equilibrium. In terms of using fictitious play to solve multi-agent problems, \cite{HeSi:16,LaZa:17,MgJeCo:18} design reinforcement learning algorithms assuming the system \eqref{def:Xt:general} is {unknown}; while our algorithm needs the knowledges of $b^i$, $\sigma^i$, $f^i$ and $g^i$.

\medskip
\noindent{\bf Organization of the paper.} In Section~\ref{sec:DFP}, we systematically introduce the deep fictitious play theory, and implementation of deep learning algorithms using Keras with GPU acceleration. In Section~\ref{sec:LQ}, we apply deep fictitious play to linear quadratic games, and prove the convergence of fictitious play under proper assumptions on parameters, with the limit forming an open-loop Nash equilibrium. Performance of deep learning algorithms are presented in Section~\ref{sec:numerics}, where we simulate stochastic differential games with a large number of players ({\it e.g.}, $N=24$). We make conclusive remarks, and discuss the extensions to other strategies of fictitious play and closed-loop cases in Section~\ref{sec:rmk}.

\section{Deep fictitious play}\label{sec:DFP}
In this section, we describe the theory and algorithms of deep fictitious play, which by name, is known to build on fictitious play and deep learning. We first summarize all the notations that shall be used as below. Given a probability space $(\Omega, \MCF, \PP)$, we consider 
\begin{itemize}
\item $\bm{W} = [W^0, W^1, \ldots, W^N]$, a $(N+1)$-vector of $m$-dimensional independent Brownian motions;
\item $\mathbb{F} = \{\MCF_t, 0\leq t \leq T\}$, the augmented filtration generated by $\bm{W}$;
\item $\mathbb{H}^{2}_T(\RR^d)$, the space of all progressively measurable $\RR^d$-valued stochastic processes $\alpha: [0,T] \times \Omega \hookrightarrow \RR^d$ such that $\ltwonorm{\alpha} = \EE[\int_0^T \abs{\alpha_t}^2 \ud t ] < \infty$.
\item $\mbA = \mathbb{H}^{2}_T(A)$, the space of admissible strategies, {\it i.e.}, elements in $\mbA$ satisfy \eqref{def:admissible}. $\mbA^N = \mbA \times \mbA \times \ldots \times \mbA$, a product of $N$ copies of $\mbA$;
\item $\bm{\alpha} = [\alpha^1, \alpha^2, \ldots, \alpha^N]$, a collection of all players' strategy profiles. With a negative superscript, $\bm{\alpha}^{-i} = [\alpha^1, \ldots, \alpha^{i-1}, \alpha^{i+1}, \ldots, \alpha^{N}]$ means  the strategy profiles excluding player $i$'s. If a non-negative superscript $n$ appears ({\it e.g.}, $\bm{\alpha}^n$), this N-tuple stands for the strategies from stage $n$. When both exist,  $\bm{\alpha}^{-i,n} =  [\alpha^{1,n}, \ldots, \alpha^{i-1,n}, \alpha^{i+1,n}, \ldots, \alpha^{N,n}]$ is a $(N-1)$-tuple representing strategies excluding player $i$ at stage $n$. We use the same notations for other stochastic processes ({\it e.g.}, $\bm{X}^{-i}, \bm{X}^n$);

\end{itemize}

We assume that the players start with an initial smooth belief $\bm{\alpha}^0 \in \mbA^N$. At the beginning of stage $n+1$, $\bm{\alpha}^n$ is observable by all players. Player $i$ then chooses best response to her beliefs about opponents described by their play at the previous stage $\bm{\alpha}^{-i,n}$. Then, player $i$ faces an optimization problem:
\begin{equation}\label{def:J:SFP}
\inf_{\beta^i \in \mbA} J^{i}(\beta^i;\bm{\alpha}^{-i,n}), \quad 
J^{i}(\beta^i; \bm\alpha^{-i,n}) = \EE\left[\int_0^T f^i(t,\bm{X}_t^\alpha, (\beta^i_t, \bm{\alpha}_t^{-i,n})) \ud t + g^i(\bm X_T^\alpha)\right],
\end{equation}
where $\bm X_t^\alpha = [X_t^{1,\alpha}, X_t^{2,\alpha}, \ldots, X_t^{N,\alpha}]$ are state processes controlled by $(\beta^i, \bm\alpha^{-i,n})$:
\begin{align}
\ud X_t^{\ell,\alpha}= b^\ell(t, \bm{X}_t^\alpha, (\beta^i_t, \bm{\alpha}_t^{-i,n})) \ud t & +  \sigma^\ell(t, \bm{X}_t^\alpha, (\beta^i_t, \bm{\alpha}_t^{-i,n})) \ud W_t^\ell  \nonumber  \\
 &+  \sigma^0(t, \bm{X}_t^\alpha, (\beta^i_t, \bm{\alpha}_t^{-i,n})) \ud W_t^0, \; X_0^{\ell,\alpha} = x^\ell,
\end{align}
for all $\ell \in \mc{I}$. Denote by $\alpha^{i,n+1}$ the minimizer in \eqref{def:J:SFP}:
\begin{equation}\label{def:SFP}
\alpha^{i,n+1} := \argmin_{\beta^i \in \mbA} J^{i}(\beta^i; \bm{ \alpha}^{-i,n}), \quad \forall i \in \mc{I}, n \in \mathbb{N},
\end{equation}
we assume $\alpha^{i,n+1}$ exists throughout the paper. More precisely, $\alpha^{i,n+1}$ is the player $i$'s optimal strategy at the stage $n+1$ when her opponents dynamics \eqref{def:Xt:general} evolve according to $\alpha^{j,n}$, $j \neq i$. All players find their best responses simultaneously, which together form $\bm{\alpha}^{n+1}$.

\begin{rem}
	Note that the above learning process is slightly different than the usual simultaneous fictitious play, where the belief is described by the time average of past play: $\frac{1}{n}\sum_{k=1}^n \bm{\alpha}^{-i,k}$. We shall discuss this with more details in Section~\ref{sec:avg}.
\end{rem}

As discussed in the introduction, in general one can not expect that the player's actions always converge. However, if the sequence $\{\bm{\alpha}^{n}\}_{n=1}^\infty$ ever admits a limit, denoted by $\bm\alpha^{\infty}$, we expect it to form an open-loop Nash equilibrium under mild assumptions. Intuitively, in the limiting situation, when all other players are using strategies $\alpha_t^{j,\infty}$, $j \neq i$, by some stability argument, player $i$'s optimal strategy to the control problem \eqref{def:J:SFP} should be $\alpha_t^{i,\infty}$, meaning that she will not deviate from $\alpha_t^{i,\infty}$, which makes $(\alpha_t^{i,\infty})_{i=1}^N$ an open loop equilibrium by definition. Therefore, finding an open-loop Nash equilibrium consists of iterating this play until it converges. 

We here give an argument under general problem setup using Pontryagin stochastic maximum principle (SMP). For simplicity, we present the case of uncontrolled volatility without common noise: $\sigma^i(t,\bm{x}, \bm{\alpha}) \equiv \sigma^i(t,\bm{x})$, $\forall i \in \mc{I}$, $\sigma^0 \equiv 0$, and refer to \cite[Chapter 1]{CaDe2:17} for generalization.
The Hamiltonian $H^{i,n+1}: [0,T] \times \Omega \times \RR^{dN} \times \RR^{dN} \times A \hookrightarrow \RR$ for player $i$ at stage $n+1$ is defined by:
\begin{equation}
H^{i,n+1}(t,\omega, \bm x, \bm y, \alpha ) = \bm b(t,\bm x,(\alpha, \bm{\alpha}^{-i,n})) \cdot \bm y + f^i(t,\bm x,(\alpha, \bm{\alpha}^{-i,n})),
\end{equation}
where the dependence on $\omega$ is introduced by $\bm{\alpha}^{-i,n}$. We assume all coefficients $(b^i,\sigma^i, f^i)$ are continuously differentiable with respect to $(\bm x, \bm \alpha) \in \RR^{dN} \times A^N$; $g^i$ is convex and  continuously differentiable with respect to $\bm x \in \RR^{dN}$; $A \in \RR^k$ is convex; the function $H^{i,n+1}$ is convex $\PP$-almost surely in $(\bm x, \bm \alpha)$. By the sufficient part of SMP, we look for a control $\hat\alpha^{i,n+1} \in A$ of the form:
\begin{equation}\label{def:alpha:H}
\hat \alpha^{i,n+1}(t,\omega, \bm x, \bm y) \in \argmin_{\alpha \in A} H^{i,n+1}(t,\omega, \bm x, \bm y, \alpha),
\end{equation}
and solve the resulting forward-backward stochastic differential equations (FBSDEs):
\begin{equation}\label{def:FBSDE}
\left\{ \begin{aligned}
\ud  X_t^{\ell, n+1} &= b^\ell(t, \bm{X}_t^{n+1}, (\hat \alpha^{i,n+1}(t, \bm{X}_t^{n+1}, \bm{Y}_t^{n+1}), \bm{\alpha}_t^{-i,n})) \ud t + \sigma^\ell(t, \bm{X}_t^{n+1}) \ud W_t^\ell, \\
\ud Y_t^{\ell,n+1} &= -\partial_{x^\ell} H^{i,n+1}(t,\bm{X}_t^{n+1}, \bm{Y}_t^{n+1}, \hat \alpha^{i,n+1}(t, \bm{X}_t^{n+1}, \bm{Y}_t^{n+1})) \ud t + \sum_{j=1}^N Z_t^{\ell,j,n+1} \ud W_t^j, \\
 X_0^{\ell,n+1} &= x_0^\ell, \quad Y_T^{\ell,n+1} = \partial_{x^\ell} g^i(\bm X_T^{n+1}), \quad \ell \in \mc{I}.
\end{aligned}
\right.
\end{equation}
If there exists a solution $(\bm{X}^{n+1}, \bm{Y}^{n+1}, \bm{Z}^{n+1}) \in H^2_T(\RR^{dN} \times \RR^{dN} \times \RR^{dN \times mN})$, then an optimal control to problem \eqref{def:J:SFP} is given by plugging the solution into the function $\hat \alpha^{i,n+1}$:
\begin{equation}\label{def:alpha:general}
\alpha^{i,n+1}_t = \hat \alpha^{i,n+1}(t, \bm{X}_t^{n+1}, \bm{Y}_t^{n+1}).
\end{equation}

Now suppose \eqref{def:FBSDE} is solvable, the sequence given in \eqref{def:alpha:general} converges to $\bm\alpha^\infty$ as $n \to \infty$. Denote by $(\bm{X}^{\infty}, \bm{Y}^{\infty}, \bm{Z}^{\infty})$ the solution of \eqref{def:FBSDE} with $\bm{\alpha}^n$ being replaced by $\bm{\alpha}^\infty$. If the system possesses stability, then $(\bm{X}^{\infty}, \bm{Y}^{\infty}, \bm{Z}^{\infty})$ is also the limit of $(\bm{X}^{n+1}, \bm{Y}^{n+1}, \bm{Z}^{n+1}) $. In this case, given other players using $\bm\alpha^{-i,\infty}$, the optimal control of player $i$ is
\begin{equation}
\alpha^{i,\infty}(t, \bm X_t^{\infty}, \bm Y_t^\infty) = \lim_{n\to \infty} \hat \alpha^{i,n}(t, \bm X_t^{n}, \bm Y_t^n) = \lim_{n \to \infty} \alpha^{i,n} = \alpha^{i,\infty},
\end{equation}
where we have used the stability of \eqref{def:FBSDE} and the continuous dependence of $H$ on the parameter $\bm{\alpha}^{-i,n}$ for the first identity, the solvability of \eqref{def:FBSDE} for the second identity, and the convergence of $\alpha^{i,n}$ for the last identity. Therefore, one can put appropriate conditions on $(b^i, \sigma^i, f^i, g^i)$ to ensure these, and we refer to \cite{PeWu:99,PaTa:99,MaMoYo:99,MaWuZhZh:15} for detailed discussions. Remark that, all assumptions are satisfied for the case of linear-quadratic games, and thus all the above arguments can go through. We will give more details in Section~\ref{sec:LQ}.

In general, problem \eqref{def:SFP} is not analytical tractable, and one needs to solve it numerically. Next we present a novel architecture of DNN and a deep learning algorithm that has a parallelization feature. It starts with a brief introduction on deep learning, followed by the detailed deep fictitious play algorithm.

\subsection{Preliminaries on deep learning}\label{sec:nn}
Inspired by neurons in human brains, a neural network (NN) is designed for computers to learn from
observational data. It has become an effective tool in many fields including computer vision, speech recognition, social network filtering, image analysis, {\it etc.}, where results produced by NNs are comparable or even superior to human experts. An example of NNs performing well is image classification, where the task is to identify which of a set of categories a new observation belongs to, on the basis of a training set of data containing observations of known category membership. Denote by $x$ the observations and $z$ its category. This problem consists of efficient and accurate learning of the mapping from observations to categories $x \hookrightarrow z(x)$, which can be complicated and non-trivial. Thanks to the universal approximation theorem and the Kolmogorov-Arnold representation theorem \cite{Cy:89,Ko:91,Ho:91}, NNs are able to provide good approximations to non-trivial mapping.

Our goal is to use deep neural networks to solve the stochastic control problem \eqref{def:SFP}. NNs are made by stacking layers one on top of another. Layers with different functions or neuron structures are called differently, including fully-connected
layer, constitutional layer, pooling layer, recurrent layers, {\it etc.}. As our algorithm \ref{def:algorithm} will focus on fully-connected layers, we here give an example of feed-forward NN using fully-connected layers in Figure \ref{fig:sampleNN}. Nodes in the figure represent neurons and arrows represent the information
flow. As shown, information is constantly “fed forward” from one layer to the next. The first
layer (leftmost column) is called the input layer, and the last layer (rightmost column) is called the output
layer. Layers in between are called hidden layers, as they have no connection with the external world. In
this case, there is only one hidden layer with four neurons.
\begin{figure}[H]
	\centering \includegraphics[width=0.5\textwidth, height = 0.18\textheight]{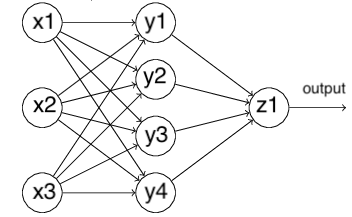}
	\caption{An illustration of a simple feedforward neural network. }\label{fig:sampleNN}
\end{figure}
We now explain how information is processed in NNs. For fully-connected layers, every neuron consists of two kinds of parameters, the weights $w$ and the bias $b$. Each layer can choose an activation function, then an input $x$ goes through it gives $f(w \cdot x + b)$. In the above example of NN, the data $\bm{x} = [x_1, x_2, x_3]$ fed to neuron $y_i$ outputs $f(\bm{w}_j \cdot \bm{x} + b_j)$, $j = 1, \ldots, 4$, which yields $\bm{y} = [y_1,y_2, y_3, y_4]$ as the input of neuron $z_1$. The final output is $z_1 = f(\bm{w}_z \cdot \bm{y} + b_z)$. In traditional classification problems, categorical information $z(\bm{x})$ associated to the input $\bm{x}$ is known, and the optimal weights and bias are chosen to minimize a loss function $L$:
\begin{equation}\label{def:NNloss}
c(w,b) := L(z,z(\bm{x})),
\end{equation}
where $z$ is the output of the NNs, as functions of $(w,b)$, and $z(\bm{x})$ is given from the data. The process of finding optimal parameters is called the training of an NN. 

The activation function $f$ and loss function $L$ are chosen at the user's preference, and common choices are sigmoid $\displaystyle\frac{1}{1+e^{x}}$, ReLU $x^+$ for $f$, and mean squared error $\sum \abs{z - z(\bm x)}^2$ or cross entropy $-\sum z(\bm x) \log(z)$ for $L$ in \eqref{def:NNloss}. In terms of finding the optimal parameters $(\bm{w},\bm{b})$ in \eqref{def:NNloss}, it is in general a high-dimensional optimization problem, and usually done by various stochastic gradient descent methods (e.g. Adam \cite{Adam,Adamcvg}, NADAM \cite{Dozat16}). For further discussions, we refer to \cite[Section 2.1]{Hu:19} and \cite[Section 2.2]{deep:2018}.

However, solving \eqref{def:SFP} is not in line with the above procedure, in the sense that there is no target category $z(\bm{x})$ assigned to each input $\bm{x}$, and consequently, the loss function is not a distance measuring between the network output $z$ and $z(\bm{x})$. We aim at approximating the optimal strategy at each stage by feedforward NNs. What we actually use NN is its ability of approximating complex relations by composition of simple functions (by stacking fully connected layers) and finding the (sub-)optimizer with its well-developed built-in stochastic gradient descent (SGD) solvers. We shall explain further the structures of NNs in the following section.

\subsection{Deep learning algorithms}\label{sec:algorithm}
We introduce the algorithms of deep learning based on fictitious play by describing two key parts as below.
\subsubsection{Part I: solve a stochastic control problem using DNN}\label{sec:DNN}
We in fact solve a time discretization version of problem \eqref{def:SFP}.  Partitioning $[0,T]$ into $N_T$ equally-spaced intervals, with the time step $h = T/N_T$.  Denote by $\tilde {\mathbb{F}} := \{\tilde \MCF_k, 0 \leq k \leq N_T\}$ the ``discretized'' filtration with $\tilde \MCF_k = \sigma\{\bm{W}_{jh}, 0 \leq j \leq k\}$.
%, and $\tilde \mbA$ the space of square-integrable $\tilde \MCF$-adapted process. 
A discrete-time analogy of \eqref{def:SFP} is:
\begin{equation}\label{def:sc1}
\tilde \alpha^{i,n+1} =  \argmin_{\{\beta^i_{kh}\in \tilde \MCF_k\}_{k=0}^{N_T-1}}\tilde J^{i}(\beta^i; \tilde{\bm{\alpha}}^{-i,n}),
\end{equation}
where 
\begin{equation}\label{def:sc2}
\tilde J^{i}(\beta^i; \tilde{\bm{\alpha}}^{-i,n}):= \EE\left[\sum_{k=0}^{N_T-1} f^i\left(kh, \bm X_{kh}, (\beta_{kh}^i, \tilde {\bm{\alpha}}_{kh}^{-i,n})\right) h + g^i(\bm X_T)\right],
\end{equation}
and each entry $X_{kh}^\ell$ in $\bm{X}_{kh}$ follows the Euler scheme of \eqref{def:Xt:general} associated to the strategy $\beta^\ell$ if $\ell = i$, and to $\tilde \alpha^{\ell,n}$ if $\ell \neq i$:

\begin{equation}\label{eq:Xt:discrete}
\begin{aligned}
X_{(k+1)h}^\ell &= X_{kh}^\ell + b^\ell(kh, \bm X_{kh}, (\beta_{kh}^i, \tilde{ \bm\alpha}_{kh}^{-i,n})) h + \sigma^\ell(kh, \bm X_{kh}, (\beta_{kh}^i, \tilde{ \bm\alpha}_{kh}^{-i,n}))(W^\ell_{(k+1)h}- W^\ell_{kh}) \\
& \quad + \sigma^0(kh, \bm X_{kh}, (\beta_{kh}^i, \tilde{ \bm\alpha}_{kh}^{-i,n})) (W^0_{(k+1)h}- W^0_{kh}), \quad \ell \in \mc{I}.
\end{aligned}
\end{equation}
Remark that the above time discretization uses Euler scheme, and thus leads to a weak error of $\mathcal{O}({h})$ and a strong error of $\mathcal{O}(\sqrt{h})$.
	
In the discrete setting, $\beta^i_{kh} \in \tilde \MCF_k$ is interpreted as $\beta^i_{kh} = \beta^i_{kh} (\bm{X}_0, \bm{W}_h, \ldots, \bm{W}_{kh})$. Our task is to approximate the functional dependence of the control on noises. Similar to the strategy used in \cite{HaE:16}, we implement this by a multilayer feedforward sub-network:
\begin{equation}\label{def:strategy:approx}
\beta^i_{kh} \sim \beta^i_{kh} (\bm{X}_0, \bm{W}_h, \ldots, \bm{W}_{kh}\vert \theta_{kh}^i),
\end{equation}
where $\theta_{kh}^i$ denotes the collection of all weights and biases in the $k^\text{th}$ sub-network for player $i$. Then, at stage $n+1$, the optimization problem for player $i$ becomes
\begin{equation}\label{def:J:discrete}
\min_{{\left\{\theta_{kh}^{i}\right\}_{k=0}^{N_T-1}} } \EE\left[\sum_{k=0}^{N_T-1} f^i\left(kh, \bm X_{kh}, (\beta_{kh}^i(\theta_{kh}^i), \tilde {\bm{\alpha}}_{kh}^{-i,n})\right) h + g^i\left(\bm X_T\right)\right].
%\tilde J^{i}(\beta^i, \tilde{\bm{\alpha}}^{-i,n})
\end{equation}
Denote by $\theta_{kh}^{i,n+1}$ the minimizer of \eqref{def:J:discrete}, then the approximated optimal strategy $\tilde \alpha^{i,n+1}$ is given by \eqref{def:strategy:approx} evaluated at  $\theta_{kh}^{i,n+1}$. Note that even though we only write explicitly the dependence of $\beta^i$'s on $\theta^i$, it affects all $X^i$'s through interactions \eqref{eq:Xt:discrete}. In fact, $X^\ell_{kh}$ depends on $\{\theta_0^{i,n+1}, \ldots, \theta_{(k-1)h}^{i,n+1}\}$,  for all $\ell \in \mc{I}$. Therefore, finding the gradient in minimizing \eqref{def:J:discrete} is a non-trivial task. Thanks to the key feature of NNs, computation can be done via a forward-backward propagation algorithm derived from chain rule composition \cite{Ni:online}. 

The architecture of the NN for finding $\tilde \alpha^{i,n+1}$ is presented in Figure~\ref{fig:algorithm}: ``InputLayer" are inputs of this network; ``Rcost'' and ``Tcost'', representing running and terminal cost, contribute to the total cost $J^i$; ``Sequential'' is a multilayer feedforward subnetwork for control approximation at each time step; ``Concatenate'' is an auxiliary layer combining some of previous layers as inputs of ``Sequential''.

\begin{figure}[H]
	\centering \includegraphics[width=\textwidth]{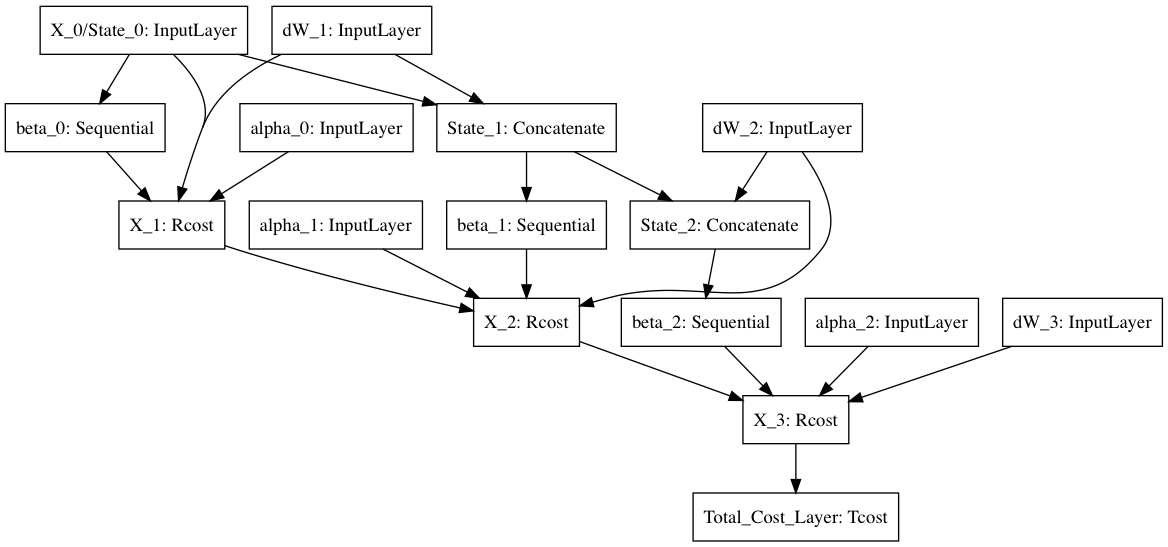}
	\caption{Illustration of the network architecture for problem \eqref{def:J:discrete} with $N_T = T = 3$.}\label{fig:algorithm}
\end{figure}

There are three main kinds of information flows in the network for each period $[kh, (k+1)h]$, $k = 0, \ldots N_T-1$:
\begin{enumerate}
\item $\text{State}_{kh} := (\bm{X}_0, \bm{W}_h, \ldots, \bm{W}_{kh}) \to \beta^i_{kh}$ given by ``Sequential'' layer. It is an $L$-layer feed-forward subnetwork to approximate the control of player $i$ at time $kh$, containing parameters $\theta_{kh}^i$ to be optimized. 

\item $(\bm{X}_{kh}, \beta_{kh}^i, \bm{\alpha}_{kh}^{-i,n}, \ud \bm{W}_{(k+1)h} := \bm{W}_{(k+1)h} - \bm{W}_{kh} ) \to \bm{X}_{(k+1)h}$ given by ``Rcost'' layer. This layer possesses two functions. Firstly, it computes the running cost at time $kh$ using $(\bm{X}_{kh}, \beta_{kh}^i, \tilde{\bm{\alpha}}^{-i,n}_{kh})$, where $\beta_{kh}^i$ is produced from previous step. The cost is then added to the final output. Secondly, it updates states value $\bm{X}_{(k+1)h}$ via dynamics \eqref{eq:Xt:discrete}, using $\beta_{kh}^i$ for player $i$  and using $\bm{\alpha}_{kh}^{-i,n}$ for player $j \neq i$ which are inputs of the network. No parameter is minimized at this layer.

\item $(\text{State}_{kh}, \ud \textbf{W}_{(k+1)h}) \to \text{State}_{(k+1)h}$ given by ``Concatenate'' layer. This layer combines two previous ones together, acting as a preparation for the input of ``Sequential'' layer. No parameter is minimized at this layer. 
\end{enumerate}
At time $T = N_T\times h$, the terminal cost is calculated using $\bm{X}_{T}$ and added to the final output via ``Tcost'' layer. With these preparations, we introduce the deep fictitious play as below. 

\subsubsection{Part II: find an equilibrium by fictitious play}\label{sec:FP}
Here we use a flowchart to describe the algorithm of deep fictitious play (see Algorithm~\ref{def:algorithm}). 

\begin{algorithm}[H]
		\caption{Deep Fictitious Play for Finding Nash Equilibrium \label{def:algorithm}}
	\begin{algorithmic}[1]
		\REQUIRE $N$ = \# of players, $N_T$ = \# of subintervals on $[0,T]$, $M$ = \# of training paths, $M'$ = \# of out-of-sample paths for final evaluation, $\bm{\alpha}^0 = \{\alpha_{kh}^{i,0} \in A \subset \RR^k, i \in \mc{I}\}_{k=0}^{N_T-1}$ = initial belief, $\bm{X}_0 = \{x_0^i \in \RR^d, i \in \mc{I}\}$ = initial states
		  \STATE Create $N$ separated deep neural networks as described in Section~\ref{sec:DNN}
		  \STATE  Generate $M$ sample path of BM: $\bm{W} = \{W_{kh}^{i} \in \RR^m, i \in \mc{I}\cup \{0\} \}_{k=1}^{N_T} $
		  \STATE $n \gets 0$
		  \REPEAT

		  \STATE $n \gets n+1$
		  \FOR{$i \gets 1$ to $N$} 
		  \STATE (Continue to) Train $i^{th}$ NN with data $\{\bm{X}_0, \bm\alpha^{-i,n-1} = \{\alpha_{kh}^{j,n-1}, j \in \mc{I}\setminus \{i\}\}_{k=0}^{N_T-1}, \bm W\}$
		  \STATE Obtain the approximated optimal strategy  $\alpha^{i,n}$ and cost $J^i(\alpha^{i,n}; \bm\alpha^{-i,n-1})$ 
		  \ENDFOR
		  \STATE Collect optimal policies at stage $n$: $\bm{\alpha}^n \gets (\alpha^{1,n}, \ldots, \alpha^{N,n})$
		  \STATE \label{algo:step}Compute relative change of cost $\displaystyle err^n := \max_{i \in \mc{I}}\left\{\frac{\abs{J^i(\alpha^{i,n}; \bm\alpha^{-i,n-1}) - J^i(\alpha^{i,n-1}; \bm{\alpha}^{-i,n-2})}}{ J^i(\alpha^{i,n-1}; \bm{\alpha}^{-i,n-2})}\right\}$

		  \UNTIL$err^n$ go below a threshold

		\STATE Generate $M'$ out-of-sample paths of BM for final evaluation
		\STATE $n' \gets 0$
		\REPEAT 
		\STATE $n' \gets n'+1$
		\STATE Evaluate $i^{th}$ NN with  \{$\bm X_0$, $\bm{\alpha}^{-i,n'-1}$, out-of-sample paths\}, $\forall i \in \mc{I}$
		\STATE Obtain $\alpha^{i,n'}$ and $J^{i,n'} := J^i(\alpha^{i,n'};\bm{\alpha}^{-i, n'-1})$ $\forall i \in \mc{I}$
		\UNTIL $J^{i,n'}$ converges in $n'$, $\forall i \in \mc{I}$
		\RETURN The optimal policy $\alpha^{i,n'}$, and the final cost for each player $J^{i,n'}$
	\end{algorithmic}
\end{algorithm}

\subsection{Implementation}

{\it Computing environment.} The Algorithm~\ref{def:algorithm} described in Section~\ref{sec:FP} is implemented in Python using the high-level neural network API Keras \cite{Keras}. Numerical examples will be presented in Section~\ref{sec:numerics}. All experiments are performed using Amazon EC2 services, which provide a variety of instances for computing acceleration. All computations use NVIDIA K80 GPUs with 12GiB of GPU memory on Deep Learning Amazon Machine Image running on Ubuntu 16.04.

{\it Parallelizability.} As $N$ going relatively large, to make computation manageable, one can distribute Step $5-9$ to several GPUs. That is, assigning each available GPU the task of training a subset of neural networks, where this subset is fixed from stage to stage. This will speed up the computation time significantly, as peer-to-peer GPU communications are not needed in the designed algorithm. 

{\it Input, output and parameters for neural networks.} Before training, we sample $\bm{W} = \{W_{kh}^{i} \in \RR^m, i \in \mc{I} \}_{k=1}^{N_T}$, which, together with the initial states $\bm{X}_0$ and initial belief $\bm{\alpha}^0 = \{\alpha_{kh}^{i,0} \in A \subset \RR^k, i \in \mc{I}\}_{k=0}^{N_T-1}$, are the inputs of NNs. Adam, a variant of SGD that adaptively estimates lower-order moments, is chosen to optimize the parameters $\{\theta_{kh}^i\}_{k=0}^{N_T-1}$. The hyper-parameters set for Adam solver follows the original paper \cite{Adam}. Regarding the architecture of ``Sequential'', it is a $L$-layered subnetwork. We set $L=4$, with 1 input layer, 2 hidden layers, and 1 output layer containing $k$ nodes. Rectified linear unit is chosen for hidden layers while no activation is applied to the output layer. We also add Batch Normalization \cite{batch} for hidden layers before activation. This method performs the normalization for each training mini-batch to eliminate  internal covariate shift phenomenon, and thus frees us from delicate parameter initialization. It also acts as a regularizer, in some cases eliminating the need for Dropout. Note that the choice of $L$ and size of $\{\theta_{kh}^i\}_{k=0}^{N_T-1}$ are empirical. For testing problems that have benchmark solutions, one can do grid-search method to select the one with the best performance in the validation set. However, for real problems there is no universal rule for all problem settings.

Parameters of the network are initialized at Step 1. In Step 7, training continues from previous stage without re-initialization. This is because, although opponents' policies change from stage to stage, they will not vary significantly and parameter values from previous stage should be better than a random initialization. For fixed computational budget, instead of using the stopping criteria in Step 12 one can terminate the loop until $n$ reaches a predetermined upper bound $\bar n$. In Step 7, the number of epochs to train the model at every single stage does not need to be large (at the scale of hundreds). This is because we are not aiming at a one-time accurate approximation of the optimal policy. Especially at the first few rounds when opponents' policies are far from optimal, pursuing accurate approximation is not meaningful. Instead, by using small budget to obtain moderate accuracy at each iteration, we are able to repeat the game for more times. In summary, for the two computational scheme: large $\bar n$ small epochs, and small $\bar n$ large epochs, the former one is better. 

If opponents' policies stay the same from stage to stage, then the two schemes receive the same accuracy. This is justified by the following argument: Suppose opponents' policies stay the same, then player $i$ essentially faces the same optimization problem from stage to stage. Since we do not re-initialize network parameters in Step 7, the difference between the two schemes is to train the same problem with small epochs and large rounds \emph{vs.} large epochs and small rounds. This is the same in terms of SGD training, thus should lead to the same relative error.
In reality, the opponents' policies is updated from time to time, and the former scheme enables us to obtain player $i$'s reaction with more updated belief of his opponents. Step 15-19 are not computational costly, and the value functions usually converge after several iterations in our numerical study.

 \section{Linear-Quadratic games}\label{sec:LQ}
Although the deep fictitious theory and algorithm can be applied for any $N$-player game, the proof of convergence is in general hard. Here 
we consider a special case of linear-quadratic symmetric $N$-player games, and analyze the convergence of $\bm{\alpha}^{n}$ defined in \eqref{def:SFP}. 
%for the following strategy of fictitious play: the player makes her decision based on other players' control of previous stage. 
The strategy analyzed here will provide an open-loop Nash equilibrium, as proved at the end of section. 
%In fact, one can propose other strategies of fictitious play, which will result in different types of equilibrium, on which we will make further discussions in Section~\ref{sec_state_FP}. 

We follow the linear-quadratic model proposed in \cite{CaFoSu:15}, where players's dynamics interact through their empirical mean:
\begin{equation}
\ud X_t^i = [a(\overline X_t - X_t^i)   + \alpha_t^i ] \ud t  + \sigma \left( \rho \ud W_t^0 + \sqrt{1-\rho^2} \ud W_t^i\right), \quad X_0^i = x^i, \quad \overline X_t = \frac{1}{N}\sum_{i=1}^N X_t^i. \label{def:Xt}
\end{equation}
Here $\{W_t^i, 0 \leq i \leq N\}$ are independent standard Brownian motions (BMs). %To reconcile  the modeling in Section~\ref{sec_intro}, one can view $\widetilde W_t^i =  \rho W_t^0 + \sqrt{1-\rho^2} W_t^i$ as the noise term in \eqref{def_Xt_general}. 
 %$ \ud \average{W^i,W^j}_t = \rho \ud t$.
 %$\widetilde W_t^i = \rho W_t^0 + \sqrt{1-\rho^2}W_t^i$, and  $W_t^j$, $ j = 1, \ldots, N$ are independent BMs. 
Each player $i \in \{1, 2, \ldots, N\}$ controls the drift by $\alpha_t^i$ in order to minimize the cost functional
\begin{equation}
J^i(\alpha^1, \ldots, \alpha^N) = \EE\left\{ \int_0^T f^i(\bm{X}_t, \alpha_t^i) \ud t + g^i(\bm{X}_T)\right\}, \label{def:J}
\end{equation}
with the running cost defined by 
\begin{equation}
f^i(\bm{x},\alpha) = \half \alpha^2 - q \alpha(\bar x - x^i) + \frac{\eps}{2}(\bar x - x^i)^2, \quad \bar x = \frac{1}{N}\sum_{i=1}^N x^i,
\end{equation}
and the terminal cost function $g^i$ by 
\begin{equation}
g^i(\bm{x}) = \frac{c}{2}(\bar x - x^i)^2. 
\end{equation}
All parameters $a, \eps, c, q$ are non-negative, and $q^2 \leq \eps$ is imposed so that $f^i(\bm{x}, \alpha)$ is convex in $(\bm{x}, a)$. In \cite{CaFoSu:15}, $X_t^i$ is viewed as the log-monetary reserves of bank $i$ at time $t$. For further interpretation, we refer to \cite{CaFoSu:15}. 

In the spirit of fictitious play, the $N$-player game is recasted into $N$ individual optimal control problems played iteratively. The players start with a smooth belief of their opponents' actions $\bm{\alpha}^0$. At stage $n+1$, the players have observed the same past controls $\alphain$'s, and then each player optimizes her control problem individually, assuming other players will follow their choice at state $n$. %At stage $n+1$, everyone's past strategies $(\alphain)_{i=1}^N$ are known to all players. 
That is, for player $i$'s problem, her dynamics are controlled through $\alpha^i_t$, while other players' states evolve according to the past strategies $\bm{\alpha}^{-i,n}$:
\begin{align}
\ud X_t^{i,n+1} &= [a(\overline X_t^{n+1} - X_t^{i,n+1}) + \alpha_t^i] \ud t + \sigma (\rho \ud W_t^0 + \sqrt{1-\rho^2} \ud W_t^i),\label{def:Xti:lq} \\
\ud X_t^{j, n+1} & = [a(\overline X_t^{n+1} - X_t^{j, n+1}) + \alpha_t^{j,n}] \ud t + \sigma (\rho \ud W_t^0 + \sqrt{1-\rho^2} \ud W_t^j), \quad j \neq i. \label{def:Xtj:lq}
\end{align}
Player $i$ faces an optimal control problem: 
\begin{equation}\label{def:J:lq}
\begin{aligned}
&\inf_{\alpha^i \in \mbA} J^{i,n+1}(\alpha^i; \bm{\alpha}^{-i,n}), \text{ where } \\
& J^{i,n+1}(\alpha^i;\bm{\alpha}^{-i,n}) := \EE\left\{\int_0^T   \half (\alpha_t^i)^2 - q\alpha_t^i (\overline X_t^{n+1} - X_t^{i,n+1}) +  \frac{\eps}{2}(\overline X_t^{n+1} - X_t^{i,n+1})^2 \ud t \right.\\
&\hspace{150pt}+ \frac{c}{2}(\overline X_T^{n+1} - X_T^{i,n+1})^2\Bigg\}.
 \end{aligned}
\end{equation}
The space where we search for optimal $\alpha^i$ is the space of square-integrable progressively-measurable $\RR$-valued processes on $\mbA: = \mathbb{H}^2_T(\RR)$, to be consistent with open-loop equilibria. Denote by  $\alpha^{i,n+1}$ the minimizer of this control problem at stage $n+1$:
\begin{equation}\label{def:alphaast}
\alpha^{i,n+1} := \argmin_{\alpha^i \in \mbA} J^{i,n+1}(\alpha^i; \bm{\alpha}^{-i,n}).
\end{equation}
In what follows, we shall show:
\begin{enumerate}[(a)]
\item $\alpha^{i,n+1}$ exists $\forall i \in \mc{I}, n \in \NN$, that is, the minimal cost in \eqref{def:J:lq} is always attainable;
\item the family $\{\bm{\alpha}^n\}$ converges;
\item the limit of $\bm\alpha^n$ forms an open-loop Nash equilibrium.
\end{enumerate}

\subsection{The probabilistic approach}
Observing that the cost functional $J^{i,n+1}$ in \eqref{def:J:lq} solely depends on the process $\widetilde X^{i,n+1} := \overline X^{n+1}  - X^{i,n+1}$ and the control $\alpha^i$, we make the following simplification.  Notice that \eqref{def:Xti:lq} and \eqref{def:Xtj:lq} imply
\begin{equation}\label{def:Xttilde}
\ud \widetilde X_t^{i, n+1} =\left[\frac{\sum_{j\neq i} \alpha_t^{j,n}}{N} - \frac{N-1}{N}\alpha_t^i - a \widetilde X_t^{i,n+1}\right] \ud t + \sigma\sqrt{1-\rho^2}(\frac{1}{N}\sum_{j=1}^N \ud W_t^j - \ud W_t^i).
\end{equation}
Then, player $i$'s problem is equivalent to:
\begin{equation}
\inf_{\alpha^i \in \mbA}\EE\left\{\int_0^T   \half (\alpha_t^i)^2 - q\alpha_t^i \widetilde X_t^{i, n+1} +  \frac{\eps}{2} (\widetilde X_t^{i,n+1})^2 \ud t + \frac{c}{2}(\widetilde X_T^{i,n+1})^2\right\}.
\end{equation}

In what follows, we show the existence of unique minimizer, denoted by $\alpha^{i,n+1}$, using SMP. The Hamiltonian for player $i$ at stage $n+1$ reads as
\begin{equation}
H^{i,n+1}(t,\omega, x,y,\alpha) = (\frac{\sum_{j\neq i} \alpha^{j,n}_t}{N}-\frac{N-1}{N}\alpha - ax)y + \half \alpha^2 - q\alpha x + \frac{\eps}{2}x^2.
\end{equation}
For a given admissible control $\alpha^i \in \mbA$, the adjoint processes $(Y_t^{i,n+1}, Z_t^{i,j,n+1}, 0 \leq j \leq N)$ satisfy the backward stochastic differential equation (BSDE):
 \begin{equation}\label{def:Yt}
 \ud Y_t^{i,n+1} = -[-aY_t^{i,n+1} - q\alpha_t^i + \eps \widetilde X_t^{i,n+1}]\ud t  + \sum_{j=0}^N Z_t^{i,j,n+1} \ud W_t^j,
  \end{equation}
with the terminal condition $Y_T^{i,n+1} = c\widetilde X_T^{i,n+1}$. Standard results on BSDE \cite{PaPe:90}, together with the estimates on the controlled state $\widetilde X_t^{i,n+1}$, guarantee the existence and uniqueness of adjoint processes. Pontryagin SMP suggests the form of optimizer:
\begin{equation}\label{def:alpha}
\partial_\alpha H^{i,n+1} = 0 \iff \hat\alpha = qx + \frac{N-1}{N}y.
\end{equation}
Plugging this candidate into the system \eqref{def:Xttilde}-\eqref{def:Yt} produces a system of affine FBSDEs:
\begin{equation}\label{eq:FBSDE}
\left\{ \begin{aligned}
\ud \widetilde X_t^{i, n+1} &=\left[\frac{\sum_{j\neq i} \alpha_t^{j,n}}{N} - (a + (1-\oon)q) \widetilde X_t^{i,n+1} - (1-\oon)^2 Y_t^{i,n+1}\right] \ud t \\
&\qquad  + \sigma\sqrt{1-\rho^2}(\frac{1}{N}\sum_{j=1}^N \ud W_t^j - \ud W_t^i), \\
 \ud Y_t^{i,n+1} &= -[-(a + (1-\oon)q)Y_t^{i,n+1}  +  (\eps-q^2) \widetilde X_t^{i,n+1}]\ud t  + \sum_{j=0}^N Z_t^{i,j,n+1} \ud W_t^j, \\
 \widetilde X_0^{i,n+1} &= \overline x_0 - x_0^i, \quad Y_T^{i,n+1} = c\widetilde X_T^{i,n+1}.
\end{aligned}
\right.
\end{equation}
The sufficient condition of SMP suggests that if we solves \eqref{eq:FBSDE}, we actually have obtained the optimal control by plugging its solution into equation \eqref{def:alpha}. In fact, the coefficients satisfy the $G$-monotone property in \cite{PeWu:99}, thus the system is uniquely solved in $\mathbb{H}^2_T(\RR \times \RR \times \RR^{N+1}) $, and the resulted optimal control is indeed admissible. This answers question (a). For the other two questions, we need to further analyze \eqref{eq:FBSDE}. 

Note that the system can be decoupled using:
\begin{equation}\label{def:Ytansatz}
Y_t^{i,n+1} = K_t \widetilde X_t^{i,n+1} - \psi_t^{i,n+1},
\end{equation}
where $K_t$ satisfies the Riccati equation:
\begin{equation}\label{def:Kt}
\dot K_t = 2(a  + (1-\frac{1}{N})q) K_t + (\frac{N-1}{N})^2 K_t^2 - (\eps - q^2), \quad K_T = c,
\end{equation}
and the decoupled processes $(\widetilde X_t^{i,n+1}, \psi_t^{i,n+1}, \phi_t^{i,j,n+1}, 0 \leq j \leq N)$ satisfy:
\begin{equation}\label{eq:FBSDE:decouple}
\left\{ \begin{aligned}
\ud \widetilde X_t^{i, n+1} &=\left[\frac{\sum_{j\neq i} \alpha_t^{j,n}}{N} - \gamma_t \widetilde X_t^{i,n+1} + (1-\oon)^2 \psi_t^{i,n+1}\right] \ud t \\
& \qquad + \sigma\sqrt{1-\rho^2}(\frac{1}{N}\sum_{j=1}^N \ud W_t^j - \ud W_t^i), \\
\ud \psi_t^{i,n+1} &= -[-\gamma_t \psi_t^{i,n+1} - K_t \frac{\sum_{j\neq i} \alpha_t^{j,n}}{N}]\ud t + \sum_{j=0}^N \phi_t^{i,j,n+1} \ud W_t^j, \\
\widetilde X_0^{i,n+1} &= \overline x_0 - x_0^i, \quad \psi_T^{i,n+1} = 0,
\end{aligned}
\right.
\end{equation}
where $\gamma_t$ is a deterministic function on $[0,T]$:
\begin{equation}\label{def:gamma}
\gamma_t = a + (1-\oon)q + (1-\oon)^2K_t,
\end{equation}
and the optimal strategy is expressed as
\begin{equation}\label{eq:alpha}
\alpha_t^{i,n+1} = (q + (1-\oon)K_t) \widetilde X_t^{i,n+1} - (1-\oon)\psi_t^{i,n+1}.
\end{equation}
Again, since $\bm\alpha^n \in \mathbb{H}_T^2(\RR^N)$, existence and uniqueness of $(\psi^{i,n+1}, \phi^{i,j,n+1}, 0\leq j \leq N) \in \mathbb{H}^2(\RR \times \RR^{N+1})$  is guaranteed $\forall i \in \mc{I}$, $n \in \NN$, and the forward equation possesses a unique strong solution. 
Then the triple $(X^{i,n+1},Y^{i,n+1},Z^{i,j,n+1})$ solves the original FBSDEs \eqref{eq:FBSDE} with $Y_t^{i,n+1}$ defined by \eqref{def:Ytansatz} and $Z_t^{i,j, n+1}$ by 
\begin{align}
Z_t^{i,0,n+1} = -\phi_t^{i,0,n+1}, \quad Z_t^{i,j,n+1} = -\phi_t^{i,j,n+1} + K_t\sigma \sqrt{1-\rho^2}(\frac{1}{N} - \delta_{i,j}), \quad j \in \mc{I}.
\end{align}

To answer questions (b) and (c), we state the main theorem in this section, with the proofs  presented in the next subsections.

\begin{thm}\label{thm:cvg}
	For linear-quadratic games, the family $\{\bm \alpha^n\}_{n \in \NN}$ defined in \eqref{def:J:lq}-\eqref{def:alphaast} converges if
	\begin{equation}\label{def:cvgcondition}
	\frac{1-e^{-2T\underline \gamma}}{\underline \gamma}C < 1.
	\end{equation}
	It forms an open-loop Nash equilibrium of the original problem \eqref{def:Xt}-\eqref{def:J}. Moreover, the limit, denote by $\bm\alpha^\infty$, is independent from the choice of initial belief $\bm\alpha^0$. Here  $\underline \gamma = a + (1-\oon)q + (1-\oon)^2 \underline K$, $\overline K$ and $\underline K$ are the maximum and minimum value of $K_t$ on $[0,T]$, and the constant C is 
	\begin{equation}\label{def:C}
	C = (1-\oon)^2 \left((1-\oon)^2 \overline K^2 + (q + (1-\oon)\overline K)^2 \left(\frac{1-e^{-2T\underline \gamma}}{\underline \gamma}(1-\oon)^4\overline K^2 + 2\right)\right).
	\end{equation}
\end{thm}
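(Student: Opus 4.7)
\textbf{Proof plan for Theorem \ref{thm:cvg}.} The plan is to recast the iteration $\bm{\alpha}^n \mapsto \bm{\alpha}^{n+1}$ as a mapping $\Phi$ on $\mathbb{H}^2_T(\RR^N)$ and to prove it is a contraction under condition \eqref{def:cvgcondition}, so that Banach's fixed point theorem yields convergence of $\{\bm{\alpha}^n\}$ to a unique limit $\bm{\alpha}^\infty$ independent of $\bm{\alpha}^0$. Once this is done, the Nash equilibrium property follows from the sufficient part of SMP applied at the fixed point.

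\textbf{Step 1: Contraction for $\psi$.} The crucial observation is that the backward equation in \eqref{eq:FBSDE:decouple} is a pure linear BSDE in $(\psi^{i,n+1},\phi^{i,\cdot,n+1})$ driven only by $\bm{\alpha}^{-i,n}$, decoupled from $\widetilde X^{i,n+1}$. Writing the difference $\delta\psi^{i,n} := \psi^{i,n+1}-\psi^{i,n}$ and $\delta\bar\alpha^{-i,n-1} := \tfrac1N\sum_{j\neq i}(\alpha^{j,n}-\alpha^{j,n-1})$, the integrating-factor $e^{-\int_0^t \gamma_s\,\ud s}$ (or standard a priori BSDE estimates) gives
\begin{equation}
\EE\bigl[(\delta\psi_t^{i,n})^2\bigr] \leq \overline K^2\,\frac{1-e^{-2(T-t)\underline\gamma}}{\underline\gamma}\,\EE\Bigl[\int_t^T (\delta\bar\alpha_s^{-i,n-1})^2\,\ud s\Bigr].
\end{equation}
The exponential factor and the constant $\underline\gamma$ arise naturally by bounding $\gamma_t\geq \underline\gamma$ and $K_t\leq \overline K$.

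\textbf{Step 2: Gronwall estimate for $\widetilde X$.} Next, substitute $\delta\psi^{i,n}$ into the forward equation for $\delta\widetilde X^{i,n}$. Because the noise cancels in the difference, $\delta\widetilde X^{i,n}_t$ satisfies a purely deterministic-coefficient linear ODE with source terms $\delta\bar\alpha^{-i,n-1}_t$ and $(1-\oon)^2\delta\psi^{i,n}_t$. A Gronwall argument, together with the $\psi$-bound from Step 1, yields
\begin{equation}
\EE\bigl[(\delta\widetilde X_t^{i,n})^2\bigr] \leq \tfrac{1-e^{-2T\underline\gamma}}{\underline\gamma}\Bigl[(1-\oon)^4\overline K^2\,\tfrac{1-e^{-2T\underline\gamma}}{\underline\gamma}+2\Bigr]\,\EE\Bigl[\int_0^T(\delta\bar\alpha_s^{-i,n-1})^2\,\ud s\Bigr].
\end{equation}
This is where the inner factor $\tfrac{1-e^{-2T\underline\gamma}}{\underline\gamma}(1-\oon)^4\overline K^2+2$ in the definition of $C$ comes from.

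\textbf{Step 3: Close the loop on $\alpha$.} The identity \eqref{eq:alpha} implies
\begin{equation}
\delta\alpha^{i,n+1}_t = (q+(1-\oon)K_t)\,\delta\widetilde X_t^{i,n+1} - (1-\oon)\,\delta\psi_t^{i,n+1}.
\end{equation}
Squaring, using $(x+y)^2\leq 2x^2+2y^2$ or $(a+b)^2$ plus the bounds from Steps 1--2, summing over $i$ and integrating in $t$, I expect the sum $\sum_{i}\EE\int_0^T(\delta\alpha^{i,n+1}_t)^2\,\ud t$ to be bounded by $\tfrac{1-e^{-2T\underline\gamma}}{\underline\gamma}C \cdot \sum_i\EE\int_0^T(\delta\bar\alpha^{-i,n-1}_t)^2\,\ud t$, and the $(1-\oon)^2$ prefactor in $C$ follows from $\bar\alpha^{-i}$ being an average (using Cauchy--Schwarz, $(\delta\bar\alpha^{-i})^2\leq \tfrac1{N^2}\sum_{j\neq i}(\delta\alpha^j)^2\cdot(N-1)$, which together with the outer sum gives the stated coefficient). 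Condition \eqref{def:cvgcondition} then says precisely that this mapping is a strict contraction on $\mathbb{H}^2_T(\RR^N)$ equipped with its natural norm.

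\textbf{Step 4: Nash property of the limit and independence of $\bm{\alpha}^0$.} Banach's fixed point theorem yields a unique $\bm{\alpha}^\infty\in\mathbb{H}^2_T(\RR^N)$ with $\Phi(\bm{\alpha}^\infty)=\bm{\alpha}^\infty$, and all iterations converge to it regardless of $\bm{\alpha}^0$. For the Nash property, note that by construction $\alpha^{i,\infty}$ is the minimizer of $J^{i,\infty}(\cdot\,;\bm{\alpha}^{-i,\infty})$, which is precisely $J^i(\cdot\,;\bm{\alpha}^{-i,\infty})$ in the original game \eqref{def:J:lq}; hence $J^i(\bm{\alpha}^\infty)\leq J^i(\beta^i,\bm{\alpha}^{-i,\infty})$ for any $\beta^i\in\mbA$, which is \eqref{def:Nash}.

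\textbf{Main obstacle.} The delicate part is Step 2 combined with Step 3: tracking the dependence on $\overline K$, $\underline\gamma$, $\oon$, and the two nested appearances of $\tfrac{1-e^{-2T\underline\gamma}}{\underline\gamma}$ so that the contraction constant comes out to exactly $\tfrac{1-e^{-2T\underline\gamma}}{\underline\gamma}C$ with $C$ as in \eqref{def:C}. In particular, one must be careful that the Gronwall-type bound on $\delta\widetilde X$ and the subsequent $(a+b)^2$ expansion do not produce cross terms that enlarge the constant, and that the averaging over $j\neq i$ in $\delta\bar\alpha^{-i}$ produces exactly the $(1-\oon)^2$ prefactor rather than something larger. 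Everything else (existence of $K_t$, $\psi^{i,n}$, $\widetilde X^{i,n}$) has already been established in the decoupling argument preceding the theorem.
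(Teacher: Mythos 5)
Your proposal is correct and follows essentially the same route as the paper: the same three difference estimates (BSDE bound for $\Delta\psi$, variation-of-constants/Gronwall bound for $\Delta\widetilde X$ after the noise cancels, then the decomposition \eqref{eq:alpha} with $(a+b)^2\le 2a^2+2b^2$), tracking the constants to arrive at the contraction factor $\frac{1-e^{-2T\underline\gamma}}{\underline\gamma}C$. The only organizational difference is that you package convergence, the Nash property, and independence of the initial belief into a single application of Banach's fixed point theorem, whereas the paper proves the Cauchy property of the iterates, then invokes FBSDE stability theorems to identify the limit as a best response, and then runs the contraction estimate once more on two candidate limits for uniqueness; since the estimates hold for arbitrary pairs of inputs (not just successive iterates), your streamlining is legitimate.
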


\begin{remark}
The condition \eqref{def:cvgcondition} is sufficient but not necessary. The numerical performance of the proposed algorithm can do better. In Section~\ref{sec:numerics}, the parameters are chosen so that the condition is violated, but the algorithm still converges fast, in order to illustrate the sufficiency. By observing the form of $C$ and $\underline \gamma$, we remark that the convergence rate decreases in the number of players $N$.
\end{remark}
\begin{proposition}\label{prop:case}
	The following three classes of parameters satisfy condition \eqref{def:cvgcondition}:
	\begin{enumerate}[(i)]
		\item\label{case:t} Small time duration, that is, $T$ is small.
		\item\label{case:a} Strong mean-reversion rate, {\it i.e.}, $a$ is large.
		\item\label{case:c} Small terminal cost and small intensive to borrowing or landing, that is, $c$ and $q$ are small. Also the ``remaining'' running cost of the state process\footnote{The running cost $f^i(\bm{x}, \alpha)$ can be rewritten as $f^i(\bm x, \alpha) = \half (\alpha - q(\bar x  - x^i))^2 + \half (\eps - q^2)(\bar x - x^i)^2$, therefore, can be interpreted as penalizing the control from deviating $q(\bar x - x^i)$, borrowing or lending proportionally to the difference from average with a rate $q$, as well as penalizing the distance from average with weight $\eps - q^2$. } is small, i.e., $\eps - q^2$ is small.
	\end{enumerate}
\end{proposition}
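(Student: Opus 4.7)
The plan is to exploit the explicit ODE \eqref{def:Kt} to control $K_t$ in each regime, then combine with the elementary bound $\tfrac{1-e^{-2T\underline\gamma}}{\underline\gamma}\leq\min\!\bigl(2T,\,1/\underline\gamma\bigr)$ (the first factor from $1-e^{-x}\leq x$, the second from $1-e^{-x}\leq 1$) and the trivial estimate $\underline\gamma\geq a$ obtained from $q,\underline K\geq 0$.

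Setting $\tilde a = a+(1-\oon)q\geq 0$, $\tilde c = (1-\oon)^2 > 0$ and $d=\eps-q^2\geq 0$, equation \eqref{def:Kt} reads $\dot K_t = 2\tilde a K_t+\tilde c K_t^2 - d$ with $K_T=c$. Reversing time via $s=T-t$ gives $\tilde K'(s) = -2\tilde a\tilde K - \tilde c\tilde K^2 + d$ with $\tilde K(0)=c$. The right-hand side is strictly decreasing in $\tilde K\geq 0$, positive on $[0,K^\star)$ and negative on $(K^\star,\infty)$, where the unique non-negative equilibrium is
$$K^\star \;=\; \frac{-\tilde a+\sqrt{\tilde a^2+\tilde c d}}{\tilde c} \;=\; \frac{d}{\tilde a+\sqrt{\tilde a^2+\tilde c d}}.$$
A one-line phase-line argument (trajectories cannot cross the equilibrium, and monotonicity is forced by the sign of the right-hand side) shows $\tilde K(s)\in[\min(c,K^\star),\max(c,K^\star)]$ for all $s\in[0,T]$, whence
$$\min(c,K^\star)\;\leq\;\underline K\;\leq\;\overline K\;\leq\;\max(c,K^\star),$$
together with the quantitative consequences $K^\star\leq d/(2\tilde a)\leq d/(2a)$ and $K^\star\leq\sqrt{d/\tilde c}$.

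Each case now reduces to bookkeeping with the expression \eqref{def:C}. For \eqref{case:t}, fixing all parameters except $T$, the bound $\overline K\leq\max(c,K^\star)$ is uniform in $T$, so $C$ is bounded by some constant $C_{\mathrm{fix}}$ independent of $T$; combined with $\tfrac{1-e^{-2T\underline\gamma}}{\underline\gamma}\leq 2T$, the left-hand side of \eqref{def:cvgcondition} is at most $2TC_{\mathrm{fix}}\to 0$ as $T\downarrow 0$. For \eqref{case:a}, $K^\star\leq d/(2a)\to 0$, so for $a$ large enough $\overline K\leq\max(c,K^\star)\leq c$, making $C$ bounded uniformly in $a$, while $\tfrac{1-e^{-2T\underline\gamma}}{\underline\gamma}\leq 1/\underline\gamma\leq 1/a$; the product is $O(1/a)\to 0$. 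For \eqref{case:c}, $K^\star\leq\sqrt{d/\tilde c}$ and $c$ are both small, so $\overline K$ is small; with $q$ small as well, every summand in \eqref{def:C} is quadratically small in $M:=\max(q,\overline K)$, giving $C = O(M^2)\to 0$, and the uniformly bounded factor $\tfrac{1-e^{-2T\underline\gamma}}{\underline\gamma}\leq 2T$ closes the argument.

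The only delicate step is the clean monotonicity bound $\overline K\leq\max(c,K^\star)$: it requires the right-hand side $-2\tilde a\tilde K-\tilde c\tilde K^2+d$ to be strictly decreasing on $[0,\infty)$, which uses the standing assumptions $\eps\geq q^2$ (ensuring $d\geq 0$ and $K^\star\geq 0$) and $a,q\geq 0$ (ensuring $\tilde a\geq 0$ so that $K^\star$ attracts from both sides). Once this monotonicity is in hand, the three cases are routine algebraic estimates on \eqref{def:C}, with no further analytic input needed.
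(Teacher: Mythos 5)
Your proof is correct and follows essentially the same route as the paper: establish monotonicity of $K_t$ from the sign structure of the Riccati right-hand side, deduce that $\overline K$ and $\underline K$ are sandwiched between $c$ and a fixed quantity (your equilibrium $K^\star=\delta^+/(1-\oon)^2$ is exactly the paper's root $\delta^+$ rescaled, and your bounds $K^\star\le d/(2a)$ and $K^\star\le\sqrt{d/\tilde c}$ play the role of the paper's explicit bound on $K_0$), and then run the same three case-by-case estimates on $C$ using $\tfrac{1-e^{-2T\underline\gamma}}{\underline\gamma}\le\min(2T,1/a)$. The phase-line packaging is a somewhat cleaner presentation of the same monotonicity fact, but there is no substantive difference in the argument.
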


\begin{proof}
We first notice that  the solution to \eqref{def:Kt} is smooth and monotone on $[0,T]$, by computing its derivative:
\begin{equation}
\dot K_t \sim -(\eps - q^2) + c^2(1-\oon)^2 + 2c(a + (1-\oon)q).
\end{equation}
So $\overline K = \max\{c, K_0\}$ and $\underline K = \min\{c, K_0\}$. Also, when $\dot K_t >0$, $K_0$ is bounded below by $\frac{-(\eps - q^2)-c\delta^+}{\delta^- - c(1-\oon)^2}$; otherwise when $K_t$ is decreasing,  $K_0$ is bounded above by $\frac{-(\eps - q^2)-c\delta^+}{\delta^- - c(1-\oon)^2}$, where 
\begin{equation}
\delta^\pm = -(a + (1-\oon)q) \pm \sqrt R, \quad  R = (a + (1-\oon)q)^2 + (1-\oon)^2 (\eps - q^2).
\end{equation}
Then case \eqref{case:t} follows by the fact that $C$ has an upper bound that is free of $T$.

For $a$ sufficiently large, $K_t$ is increasing and $\overline K = c$. Then $C$ has a upper bound (uniformly in $a$), and case \eqref{case:a} follows $\frac{1 - e^{-2T\underline \gamma}}{\underline \gamma} < \frac{1}{a}$. Under case \eqref{case:c}, $\overline K$ is sufficiently small, thus $C$ is small and the factor is less than 1. %Note that the condition $\eps - q^2 \ll 1$ can be interpreted as the ``compensated'' running cost, apart from $\half (\alpha - q(\bar x - x^i))$,  from state process is small.  
\end{proof}

\subsection{Proof of convergence}
This section proves Theorem~\ref{thm:cvg}.  %We first observe that, the solution $K_t$ to the Riccati equation \eqref{def_Kt} is smooth, non-negative and bounded on the interval $[0,T]$. Denote by $\overline K$ and $\underline K$ the maximum and minimum value of $K_t$ on $[0,T]$.
Define $\Delta \zeta_t^{i,n} := \zeta_t^{i,n+1} - \zeta_t^{i,n}$ the difference from stage $n$ to $n+1$ for the $i^{th}$ player, with $\zeta = \alpha, \psi, \phi, \widetilde X$, respectively.
Using equation \eqref{eq:FBSDE:decouple}, the increment in $\psi$ satisfies:
\begin{align}
\ud \Delta \psi_t^{i,n} = -[-\gamma_t \Delta \psi_t^{i,n} - \frac{K_t}{N} \sum_{j \neq i} \Delta \alpha_t^{j,n-1}] \ud t + \sum_{j=0}^N \Delta \phi_t^{i,j,n}\ud W_t^j,  \quad \Delta \psi_T^{i,n} = 0,
\end{align}
whose solution is:
\begin{equation}
\Delta \psi_t^{i,n}  = \EE\left[ \int_t^T - \frac{K_s}{N}\sum_{j \neq i} \Delta \alpha_s^{j,n-1} e^{\int_s^t \gamma_u \ud u} \ud s \Bigg\vert \MCF_t\right].
\end{equation}
By Jensen's inequality, one deduces:
%Then we perform the following $L^2$ estimate of $\Delta \psi_t^{i,n}$:
\begin{align}
\ltwonorm{\Delta \psi^{i,n}}^2 &\leq \int_0^T \EE\left[\int_t^T \frac{K_s^2}{N^2}\left(\sum_{j \neq i} \Delta \alpha_s^{j,n-1}\right)^2 e^{2\int_s^t \gamma_u \ud u} \ud s\right] \ud t  \\
&\leq \frac{\overline{K}^2}{N^2} \int_0^T \int_t^T \EE\left(\sum_{j \neq i} \Delta \alpha_s^{j,n-1}\right)^2 e^{2(t-s)\underline{\gamma}} \ud s \ud t\\
& = \frac{\overline{K}^2}{N^2} \int_0^T  \EE\left(\sum_{j \neq i} \Delta \alpha_s^{j,n-1}\right)^2 \frac{1-e^{-2s\underline{\gamma}}}{2\underline{\gamma}} \ud s  \\
&\leq  \frac{\overline{K}^2}{N^2}\frac{1-e^{-2T\underline{\gamma}}}{2\underline{\gamma}} (N-1)^2 \max_{j \neq i } \int_0^T \EE[\Delta \alpha_s^{j,n-1}]^2 \ud s \\
& \leq \frac{1-e^{-2T\underline{\gamma}}}{2\underline{\gamma}}(1-\oon)^2 \overline{K}^2 \max_{i \in \mc{I}} \ltwonorm{\Delta\alpha^{i,n-1}}^2,
\end{align}
where $\underline{\gamma} = a + (1-\oon)q + (1-\oon)^2 \underline K$. Since the RHS of the above inequality is independent of $i$, taking maximum over $\mc{I}$ yields
\begin{equation}\label{eq:psi:ineq}
\max_{i \in \mc{I}} \ltwonorm{\Delta \psi^{i,n}}^2 \leq \frac{1-e^{-2T\underline{\gamma}}}{2\underline{\gamma}}(1-\oon)^2 \overline{K}^2 \max_{i \in \mc{I}} \ltwonorm{\Delta\alpha^{i,n-1}}^2.
\end{equation}

Similarly, the dynamics of $\Delta \widetilde X_t^{i,n}$ can be derived from \eqref{eq:FBSDE:decouple}:
\begin{equation}
\ud \Delta \widetilde X_t^{i,n} = [\oon \sum_{j \neq i} \Delta\alpha_t^{j,n-1} - \gamma_t \Delta \widetilde X_t^{i,n} + (1-\oon)^2 \Delta \psi_t^{i,n}]\ud t, \quad \Delta\widetilde X_0^{i,n} = 0,
\end{equation}
which admits the solution:
\begin{equation}
\Delta \widetilde X_t^{i,n} = \int_0^t \left(\oon \sum_{j \neq i} \Delta\alpha_s^{j,n-1} + (1-\oon)^2 \Delta \psi_s^{i,n}\right) e^{-\int_s^t \gamma_u \ud u}\ud s.
\end{equation}
We next give an upper bound of increment of the forward process $\Delta \widetilde X_\cdot^{i,n}$:
\begin{align}
\ltwonorm{\Delta \widetilde X^{i,n}}^2 &\leq \int_0^T \int_0^t  \EE \left(\oon \sum_{j \neq i} \Delta\alpha_s^{j,n-1} + (1-\oon)^2 \Delta \psi_s^{i,n}\right)^2   e^{-2\int_s^t \gamma_u \ud u}\ud s \ud t \\
&  \leq 2\int_0^T \int_0^t \left( \EE [\oon \sum_{j \neq i} \Delta\alpha_s^{j,n-1}]^2 + (1-\oon)^4 \EE[ \Delta \psi_s^{i,n}]^2\right)   e^{-2(t-s)\underline\gamma} \ud s \ud t \\
& \leq 2\int_0^T \left( \EE [\oon \sum_{j \neq i} \Delta\alpha_s^{j,n-1}]^2 + (1-\oon)^4 \EE[ \Delta \psi_s^{i,n}]^2\right)  \frac{1-e^{-2(T-s)\underline \gamma}}{2\underline \gamma} \ud s \\
& \leq \frac{1-e^{-2T\underline \gamma}}{\underline \gamma} \left( (1-\oon)^2 \max_{j \neq i} \ltwonorm{\Delta \alpha^{j,n-1}}^2 + (1-\oon)^4 \ltwonorm{\Delta \psi^{i,n}}^2\right).
\end{align}
Again by taking maximum over $\mc{I}$ on both sides, one has:
\begin{equation}\label{eq:Xttilde:ineq}
\max_{i \in \mc{I}} \ltwonorm{\Delta \widetilde X^{i,n}}^2 \leq  \frac{1-e^{-2T\underline \gamma}}{\underline \gamma} \left( (1-\oon)^2 \max_{i \in \mc{I}} \ltwonorm{\Delta \alpha^{i,n-1}}^2 + (1-\oon)^4 \max_{i \in \mc{I}} \ltwonorm{\Delta \psi^{i,n}}^2\right).
\end{equation}

Recall from \eqref{eq:alpha} that the increment in the strategy can be decomposed as
\begin{equation}
\Delta \alpha_t^{i,n} = (q + (1-\oon)K_t) \Delta \widetilde X_t^{i,n} - (1-\oon) \Delta \psi_t^{i,n},
\end{equation}
together with estimates \eqref{eq:psi:ineq} and \eqref{eq:Xttilde:ineq}, we obtain:
\begin{align}
\max_{i \in \mc{I}} \ltwonorm{\Delta \alpha^{i,n}}^2 & \leq 2(q + (1-\oon)\overline K)^2 \max_{i \in I} \ltwonorm{\Delta \widetilde X^{i,n}}^2  + 2(1-\oon)^2 \max_{i \in \mc{I}} \ltwonorm{\Delta \psi^{i,n}}^2\\
& \leq \frac{1-e^{-2T\underline{\gamma}}}{\underline{\gamma}} C  \max_{i \in \mc{I}} \ltwonorm{\Delta \alpha^{i,n-1}}^2,
% \leq& \frac{1-e^{-2T\underline{\gamma}}}{\underline{\gamma}}(1-\oon)^2  \max_{i \in \mc{I}} \ltwonorm{\Delta \alpha^{i,n-1}}^2\\
% & \times \left(2(q + (1-\oon)\overline K)^2 + (1-\oon)^2 \overline K^2 + (q + (1-\oon)\overline K)^2 \frac{1-e^{-2T\underline \gamma}}{\underline \gamma}(1-\oon)^4\overline K^2\right).
\end{align}
where $C$ is a constant given in \eqref{def:C}. 
%$C = (1-\oon)^2 \left((1-\oon)^2 \overline K^2 + (q + (1-\oon)\overline K)^2 \left(\frac{1-e^{-2T\underline \gamma}}{\underline \gamma}(1-\oon)^4\overline K^2 + 2\right)\right)$.
%Notice that  $\frac{1-e^{-2T\underline{\gamma}}}{\underline \gamma} \leq 2T$, so for sufficient small $T$, 
Under condition \eqref{def:cvgcondition}, the mapping $\Delta \bm{\alpha}^{n-1} \hookrightarrow \Delta\bm{\alpha}^{n}$ is a contraction. Therefore, this proposed learning process converges in the linear-quadratic games. %Remark that for $\underline\gamma$ sufficient large, this is also a contraction, which will impose constraint on parameters $a$ and $q$ instead of time horizon. By observing the form of $C$ and $\underline \gamma$, we remark that the convergence rate decreases in the number of players $N$.

Denote the limit of $\{\bm{\alpha}^n\}$ by $\bm\alpha^\infty = [\alpha^{1,\infty}, \ldots, \alpha^{N,\infty} ]$ where the learning process start with an initial belief $\bm{\alpha}^0$. Let $(\widetilde X_t^{i,\alpha}, \psi_t^{i,\alpha}, \phi_t^{i,\alpha})$ be the solution to the decoupled system \eqref{eq:FBSDE:decouple} with $\{{\alpha}^{j,n}, j \in \mc{I}\setminus\{i\}\}$ replaced by $\{{\alpha}^{j,\infty}, j \in \mc{I}\setminus\{i\}\}$. On one hand, this corresponds to the problem of identifying player $i$'s best strategy, while others using $\bm\alpha^{-i,\infty}$, and her best choice is 
\begin{equation}
(q + (1-\oon)K_t) \widetilde X_t^{i,\alpha} - (1-\oon)\psi_t^{i,\alpha}.
\end{equation}
On the other hand, by stability theorems (e.g. \cite[Theorem 3.4.2, Theorem 4.4.3]{Zh:17}), this triple $(\widetilde X_t^{i,\alpha}, \psi_t^{i,\alpha}, \phi_t^{i,\alpha})$ is also the $L^2$ limit of $(\widetilde X_t^{i,n}, \psi_t^{i,n}, \phi_t^{i,n})$. Therefore, letting $n \to \infty$ in equation \eqref{eq:alpha} gives
\begin{equation}\label{eq:lmtrelation}
\alpha^{i,\infty} = (q + (1-\oon)K_t) \widetilde X_t^{i,\alpha} - (1-\oon)\psi_t^{i,\alpha}.
\end{equation}
Therefore, the best response for player $i$ is $\alpha^{i,\infty}$, given others play $\bm{\alpha}^{-i,\infty}$, indicating that the limit $\bm\alpha^\infty$ forms an open-loop Nash equilibrium.

It remains to prove  that the limit is independent from the initial belief. Suppose that there exist two limits $\bm{\alpha}^\infty $ and $\bm{\beta}^\infty$ arisen from two distinguished initial belief $\bm{\alpha}^0$ and $\bm{\beta}^0$, and let $(\widetilde X_t^{i,\beta}, \psi_t^{i,\beta}, \phi_t^{i,\beta})$ be the solution to \eqref{eq:FBSDE:decouple} associated with $\bm{\beta}^\infty$.
Following similar derivations in the proof of convergence gives:
\begin{align}
&\max_{i\in\mc{I}}\ltwonorm{\psi^{i,\alpha} - \psi^{i,\beta}}^2 \leq \frac{1-e^{-2T\underline{\gamma}}}{2\underline{\gamma}}(1-\oon)^2 \overline{K}^2 \max_{i \in \mc{I}} \ltwonorm{\alpha^{i,\infty} - \beta^{i,\infty}}^2, \\
&\max_{i \in \mc{I}} \ltwonorm{ \widetilde X^{i,\alpha} - \widetilde X^{i,\beta}}^2 \nonumber \\
&\qquad \leq  \frac{1-e^{-2T\underline \gamma}}{\underline \gamma} \left( (1-\oon)^2 \max_{i \in \mc{I}} \ltwonorm{\alpha^{i,\infty} - \beta^{i,\infty}}^2 + (1-\oon)^4 \max_{i \in \mc{I}} \ltwonorm{\psi^{i,\alpha} - \psi^{i,\beta}}^2\right).
\end{align}
Combining the above equations together, and using \eqref{eq:lmtrelation} for both $\alpha^{i,\infty}$ and $\beta^{i,\infty}$, 
 we deduce:
\begin{equation}
\max_{i \in \mc{I}}\ltwonorm{ \alpha^{i,\infty} - \beta^{i,\infty}}^2  \leq  \frac{1-e^{-2T\underline{\gamma}}}{\underline{\gamma}} C  \max_{i \in \mc{I}} \ltwonorm{ \alpha^{i,\infty} - \beta^{i,\infty}}^2. 
\end{equation}
%For sufficient small time interval, the factor $\frac{1-e^{-2T\underline{\gamma}}}{\underline{\gamma}} C < 1$,
Under the same condition \eqref{def:cvgcondition},  $\bm{\alpha}^\infty = \bm{\beta}^\infty$ in the $L^2$ sense. Therefore, we have shown that, independent of initial belief, the fictitious play will converge and the limit is unique.  

\subsection{Identifying the limit}
 As proved in Theorem~\ref{thm:cvg}, the limiting strategy $\bm\alpha^\infty$ forms an open-loop Nash equilibrium, and in this section, we verify it coincides with the equilibrium provided in \cite{CaFoSu:15} by direct calculations. 

Recall from \cite{CaFoSu:15}, the open-loop Nash equilibrium to the original $N$-player problem \eqref{def:Xt}--\eqref{def:J} is:
\begin{equation}\label{def:OLE}
\alpha_t^{i,\ast} = [q + (1-\frac{1}{N})\eta_t] (\overline X_t^\ast- X_t^{i,\ast}),
\end{equation}
where $X_t^{i,\ast}$ is the solution to \eqref{def:Xt} associated with $\alpha_t^{i,\ast}$, $\overline X_t^\ast$ is the average of $X_t^{i,\ast}$, and $\eta_t$ solves a Riccati equation:
\begin{equation}\label{def:eta}
\dot \eta_t = 2(a + (1-\frac{1}{2N})q)\eta_t + (1-\frac{1}{N})\eta_t^2 - (\eps - q^2), \quad \eta_T = c.
\end{equation}
Note that, the expression \eqref{def:OLE} means the open-loop equilibrium happens to be expressed as a function of the states in the equilibrium, but not a closed-loop feedback equilibrium. To be more precise, plugging  \eqref{def:OLE} into \eqref{def:Xt} yields
\begin{equation}
\ud (\overline X_t^{\ast} - X_t^{i,\ast}) = -[a + q + (1-\oon)\eta_t] (\overline X_t^{\ast} - X_t^{i,\ast}) \ud t + \sigma \sqrt{1-\rho^2} \left(\oon \sum_{i=1}^N \ud W_t^i - \ud W_t^i\right).
\end{equation}
Thus, $\alpha_t^{i,\ast}$ is indeed $\mc{F}_t$-measurable. To avoid further confusion in the sequel, we denote by $\Xi_t^{i}$ the solution to the above SDE, then
\begin{equation}\label{eq:OLE}
\alpha_t^{i,\ast} = [q + (1-\oon)\eta_t] \Xi_t^i,
\end{equation}
and $\Xi_t^i$ is the unique strong solution to the SDE:
\begin{equation}\label{eq:Xi}
\ud \Xi_t^i = -\kappa_t\Xi_t^i \ud t + \sigma \sqrt{1-\rho^2} \left(\oon \sum_{i=1}^N \ud W_t^i - \ud W_t^i\right), \quad \Xi_0^i = \overline x_0 - x_0^i,
\end{equation}
with
\begin{equation}\label{def:kappa}
 \kappa_t = a + q + (1-\oon)\eta_t.
\end{equation}

Two properties regarding $\Xi_t^i$ will be used in sequel: firstly, $\sum_{i=1}^N \Xi_t^i = 0$, $\forall t \in [0,T]$. This is straightforward by  deriving the SDE for $\overline \Xi_t$ via summing  \eqref{eq:Xi} over $i \in \mc{I}$, and using $\overline \Xi_0 = 0$. Consequently, we also have $\sum_{i=1}^N \alpha_t^{i,\ast} = 0$, $\forall t \in [0,T]$. Secondly, one has that  $\displaystyle e^{\int_0^t \kappa_u \ud u} \Xi_t^{i}$ is a martingale, follows by the SDE \eqref{eq:Xi} and the boundedness of $\eta_t$ on $[0,T]$. 

We next verify that the limit $\alpha^{i,\infty}$ coincides with \eqref{eq:OLE} by showing the optimal control to the problem \eqref{def:J:lq} is $\alpha^{i,\ast}$ where other players'  are following $\alpha^{j,\ast}$, $j \neq i$, and by the uniqueness of limit under condition \eqref{def:cvgcondition}. Denote by $(\widetilde X_t^{i,\ast}, \psi_t^{i,\ast}, \phi_t^{i,\ast})$ be the solution to the FBSDEs \eqref{eq:FBSDE:decouple} with $\alpha^{j,n}$ replaced by $\alpha^{j,\ast}$, $j \in \mc{I} \setminus \{i\}$. Essentially, the problem is to show the player $i$'s optimal response, represented by the solution of FBSDEs, $(q + (1-\oon)K_t)\widetilde X_t^{i,\ast} - (1-\oon)\psi_t^{i,\ast}$ matches her Nash strategy $\alpha^{i,\ast}$. Note that this is not a fixed-point argument as usually seen in mean-field games, since only $\bm{\alpha}^{-i,\ast}$ is needed to solve $(\widetilde X_t^{i,\ast}, \psi_t^{i,\ast}, \phi_t^{i,\ast})$.

We first solve $\psi^{i,\ast}$ from the backward process in \eqref{eq:FBSDE:decouple}. The BSDE is of affine form, and thus possesses a unique solution:
\begin{align}
\psi_t^{i,\ast} & =  \EE\left[ \int_t^T - \frac{K_s}{N}\sum_{j \neq i}  \alpha_s^{j,\ast} e^{\int_s^t \gamma_u \ud u} \ud s \Bigg\vert \MCF_t\right] =  \EE\left[ \int_t^T  \frac{K_s}{N}  \alpha_s^{i,\ast} e^{\int_s^t \gamma_u \ud u} \ud s \Bigg\vert \MCF_t\right]\\
& =  \EE\left[ \int_t^T  \frac{K_s}{N}  [q + (1-\oon)\eta_s]\Xi_s^i e^{\int_s^t \gamma_u \ud u} \ud s \Bigg\vert \MCF_t\right] \\
& =   \int_t^T  \frac{K_s}{N}  [q + (1-\oon)\eta_s]\Xi_t^i e^{-\int_t^s \kappa_u + \gamma_u\ud u}  \ud s \\
& := F(t) \Xi_t^i.
\end{align}
The function $F(t)$ satisfies
\begin{equation}\label{def:F}
\dot F(t) = F(t)(\kappa_t + \gamma_t) - \frac{K_t}{N}(q + (1-\oon)\eta_t), \quad F(T) = 0,
\end{equation}
where $\gamma_t$ and $\kappa_t$ are given by \eqref{def:gamma} and \eqref{def:kappa} respectively, and $\eta_t$ solves \eqref{def:eta}. Note that \eqref{def:F} is a first order linear ordinary differential equation (ODE) with smooth coefficients, whose solution in uniqueness is ensured by standard ODE theory. A straightforward calculation shows $K_t - \eta_t$ solves \eqref{def:F}, thus $\psi_t^{i,\ast} = (K_t - \eta_t)\Xi_t^i$.

Now to solve the forward equation for $\widetilde X_t^{i,\ast}$, we first calculate
\begin{align}
\frac{\sum_{j \neq i}\alpha^{j,\ast}}{N} + (1-\oon)^2 \psi_t^{i,\ast} &= -\frac{\alpha^{i,\ast}}{N} + (1-\oon)^2(K_t - \eta_t) \Xi_t^i \\
& =  (-\frac{q}{N} + (1-\oon)^2K_t - (1-\oon)\eta_t) \Xi_t^i,
\end{align}
therefore 
\begin{align}
\ud \widetilde X_t^{i,\ast} &= [(-\frac{q}{N} + (1-\oon)^2K_t - (1-\oon)\eta_t) \Xi_t^i - \gamma_t  \widetilde X_t^{i,\ast}] \ud t \\
& \qquad  + \sigma\sqrt{1-\rho^2}\left(\oon\sum_{i=1}^N \ud W_t^i - \ud W_t^i\right).
\end{align}
Comparing it to \eqref{eq:Xi}, one deduces $ \widetilde X_t^{i,\ast} = \Xi_t^i$. Therefore, player $i$'s optimal response to her opponents' strategy $\bm{\alpha}^{-i,\ast}$ is
\begin{align}
(q + (1-\oon)K_t)\widetilde X_t^{i,\ast} - (1-\oon)\psi_t^{i,\ast} &= (q + (1-\oon)K_t)\Xi_t^i - (1-\oon)(K_t - \eta_t)\Xi_t^i \\
&= (q + (1-\oon)\eta_t) \Xi_t^i \equiv \alpha_t^{i,\ast},
\end{align}
which implies the limit of fictitious play gives an open-loop Nash equilibrium in the linear quadratic case.

\section{Numerical experiments}\label{sec:numerics}
In this section, we present the proof of methodology for deep fictitious play by applying our algorithm to the linear-quadratic game \eqref{def:Xt}-\eqref{def:J}, which was first introduced in \cite{CaFoSu:15} to study the systemic risk. We choose this model as our study for two reasons: firstly, convergence of fictitious play under this setting has been proved in Section~\ref{sec:LQ} under model assumptions. Secondly, closed-form solution exists for this model, which enables us to benchmark the performance of our proposed scheme. Numerical results are shown in three examples of $N=5, 10, 24$ players.

The Euler scheme (with time step $h = T/N_T$) of the dynamics \eqref{def:Xti:lq}-\eqref{def:Xtj:lq} follows from \eqref{eq:Xt:discrete} with:
\begin{align}
b^\ell(t,\bm{x}, \bm{\alpha}) = a(\overline{\bm{x}} - x^\ell) + \alpha^\ell, \quad \sigma^\ell(t,\bm{x}, \bm{\alpha})  = \sigma^0(t,\bm{x}, \bm{\alpha})  \equiv \sigma, \quad \ell \in \mc{I}.
\end{align}
The model parameters chosen by numerical experiments are
\begin{equation}
T = 1, \quad \sigma = 1, \quad  a = 1, \quad  q = 0, \quad \rho = 0,  \quad \eps = 1,\quad  c = 1.
\end{equation}
Remark that in the above choice, if one computes the factor in \eqref{def:cvgcondition}, which gives $\frac{1-e^{-2T\underline \gamma}}{\underline \gamma}C  = 0.9568, 1.5420$, $1.9995$ for $N = 5, 10 ,24$ respectively, then all the three cases in Proposition~\ref{prop:case} failed. However, we can still obtain convergent numerical results, which shows the robustness of the proposed algorithms and potential improvement of our theoretical analysis.
We choose $M = 2^{16}$ samples for training of the DNNs, and $M' = 10^6$ out-of-samples for final evaluation. A validation split ratio of $25\%$ and callbacks are set to avoid over-fitting. The subnetwork for policy approximation at each time step contains 2 hidden layers and $8 + 8$ neurons. During each stage, each network is trained for 200 epochs with a mini-batch of size 1024 . A total of 10 stages are played. The true (benchmark) optimal control is computed according \eqref{def:OLE}-\eqref{def:eta}, with $\eta_t$ given in the closed form.

{\bf Example 1 ($N=5$).} We set the initial states of the five players as $x_0 = (1,5,7,3,8)^\text{T}$ and discretize the time interval $[0,1]$ into $N_T = 50$ steps. In Figure~\ref{fig:N5cost}, we compare the cost functions computed by deep fictitious play to the closed-form solution. One can see that, the relative errors of cost function for all players drop quickly under 5\% after a few iterations, and then steadily under $2\%$ after only ten iterations. In Figure~\ref{fig:N5traj}, we show in the top-left panel 
optimal trajectories from total five players computed by deep fictitious play (black star lines) \emph{vs.} by closed-form formulae (colored solid lines) at one representative realization.  One can observe that players, although start away from each other, become closer as time evolves. This is consistent with the mechanism of costs functions, as they are in favor of being together. To quantitatively measure the performance of our algorithm, we show the mean and standard deviation of the difference between NN predictions and the true solutions in the rest panels based on a total of $10^6$ sample paths.  The means are almost zero, with slightly convex or concave curves depending on player's relative ranking initially. Players start below average tend to have convex feature. %, and the further from average, the larger the curvature. 

Standard numerical schemes can do well to approximate cost functions, but not on the derivatives, which are related to the controls, while our deep learning algorithm computes directly the control, which shows a good approximation. Figure~\ref{fig:N5control}  plots two visualized paths of controls for an illustration purpose.

\begin{figure}[H]
	\begin{tabular}{ccc}
		\includegraphics[width=0.28\textwidth]{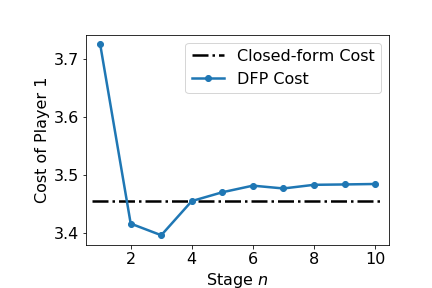} &
		\includegraphics[width=0.28\textwidth]{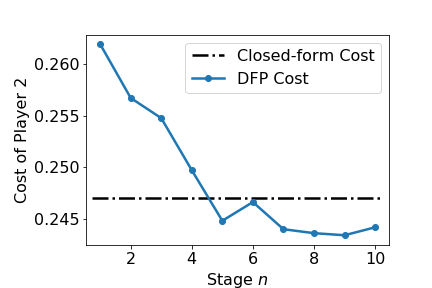}	&	
		\includegraphics[width=0.28\textwidth]{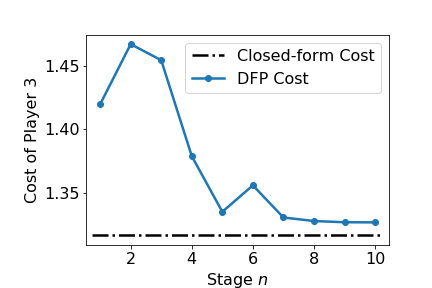} \\
		\includegraphics[width=0.28\textwidth]{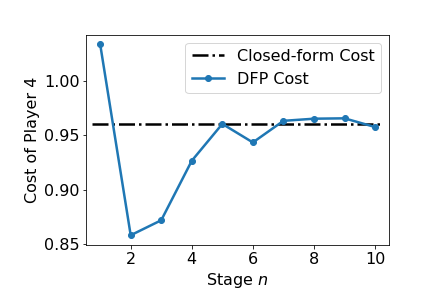} &
		\includegraphics[width=0.28\textwidth]{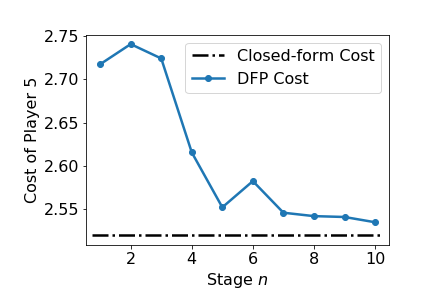} &
		\includegraphics[width=0.28\textwidth]{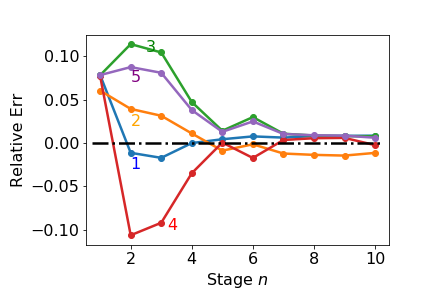} 
	\end{tabular}
\caption{Comparisons of cost functions for $N=5$ players in the linear quadratic game. The dotted dash lines are the analytical cost functions given by the closed-form solution for each individual player. The solid lines are the cost functions given by deep fictitious play for each player at the first $10$ iterations. The bottom-right panel shows the relative errors of cost function for the five players, which are pretty small at the $10^{\text{th}}$ iteration. }\label{fig:N5cost}
\end{figure}

\begin{figure}[H]
	\begin{tabular}{ccc}
		\includegraphics[width=0.28\textwidth]{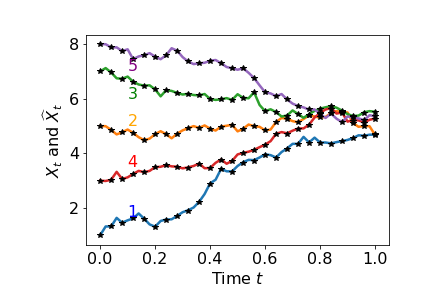} &
		\includegraphics[width=0.28\textwidth]{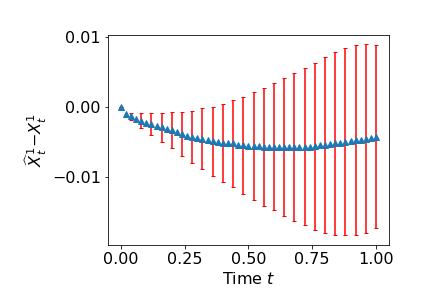}	&	
		\includegraphics[width=0.28\textwidth]{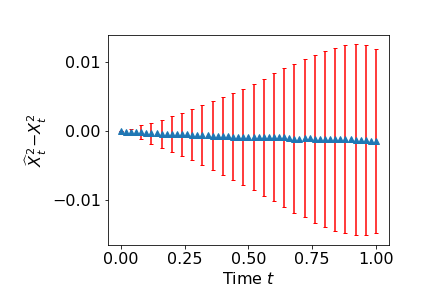} \\
		\includegraphics[width=0.28\textwidth]{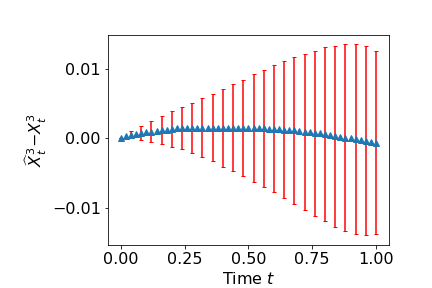} &
		\includegraphics[width=0.28\textwidth]{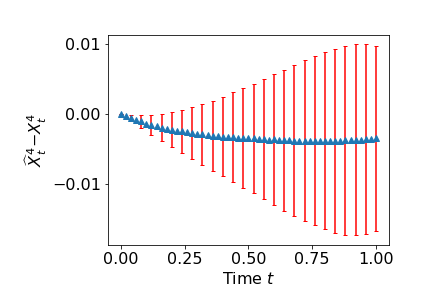} &
		\includegraphics[width=0.28\textwidth]{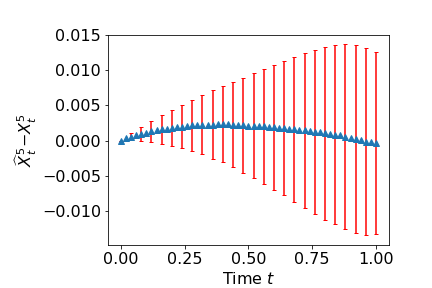} 
	\end{tabular}
	\caption{Comparisons of optimal trajectories for $N=5$ players in the linear quadratic game. Top-left panel: a single sample path of the true optimal trajectories $X_t$ (solid lines) \emph{vs.} the ones computed by deep fictitious play $\widehat X_t$ (star lines). The other panels show the mean (blue triangles) and standard deviation (red bars, plotted every other time step) of optimal trajectories errors for five players using a total sample of $10^6$ paths. Overall,  they show a good approximation of deep fictitious play to the linear quadratic game by $N=5$ players. }\label{fig:N5traj}
\end{figure}

\begin{figure}[H]
	\begin{tabular}{ccc}
		\includegraphics[width=0.3\textwidth]{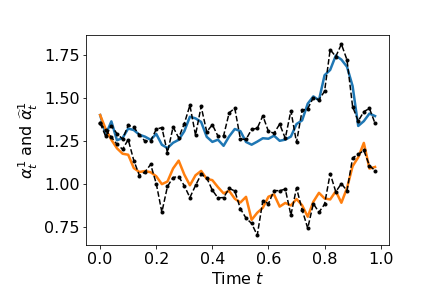} &
		\includegraphics[width=0.3\textwidth]{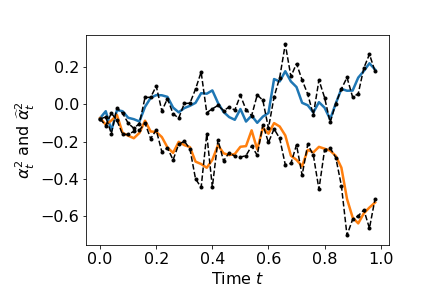}	&	
		\includegraphics[width=0.3\textwidth]{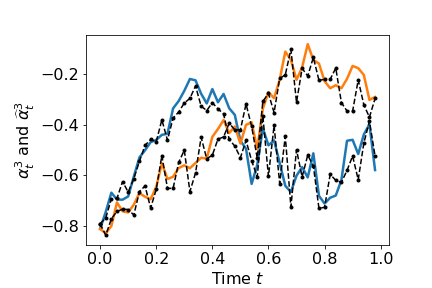} \\
	\end{tabular}
	\begin{tabular}{cc} \hspace{7em}
		\includegraphics[width=0.3\textwidth]{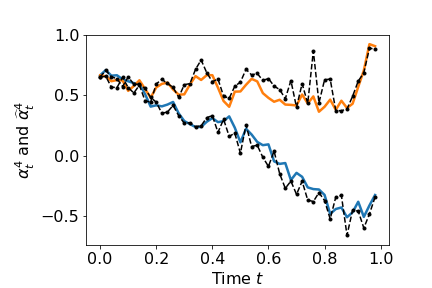}	&	
		\includegraphics[width=0.3\textwidth]{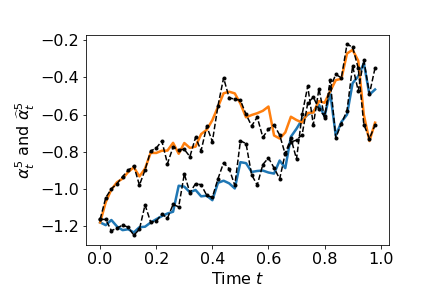}
	\end{tabular}
	\caption{Comparisons of optimal controls for $N=5$ players in the linear quadratic game. For a sake of clarity, we only show two sample paths of optimal controls for each player. The solid lines are optimal controls given by the closed-form solution, and the dotted dash lines are computed by deep fictitious play.}\label{fig:N5control}
\end{figure}

{\bf Example 2 ($N=10$).} The initial state for $i^{th}$ player is $x_0^i = 0.5 + 0.05(i-1)$. We use $N_T = 20$ time steps for the discretization of the time interval $[0,1]$. Such choices enable us to investigate the sensitivity of deep learning algorithm on initial positions and time step. In Figure~\ref{fig:N10_cost_traj}, we compare the cost functions computed by deep fictitious play to the closed-form solution, where, after only ten iterations, the maximum relative error of cost function for all players have been reduced to less than $3\%$, and the computed optimal trajectories (one visualized sample path) of selected four players by fictitious play coincide with those of the closed-form solution. The standard deviation of difference between approximated and true optimal trajectories as less then $2\%$ for $t\in[0, 1]$ for all players, and we present a selection of six in Figure~\ref{fig:N10traj}. 

Note that, although the time step $h$ is twice larger than $N=5$, the relative error does not increase significantly. However, we do not observe that the trajectories are getting closer and closer as in the case of $N = 5$, since they already start in the neighborhood of each other. We do not observe the curve neither, which justify our assertion that the curvature depends on $\overline x_0 - x_0^i$. We also show two visualized sample paths of optimal control in Figure~\ref{fig:N10control}, which presents a good approximation of the policy.

\begin{figure}[H]
	\begin{tabular}{cc}
		\includegraphics[width=0.45\textwidth]{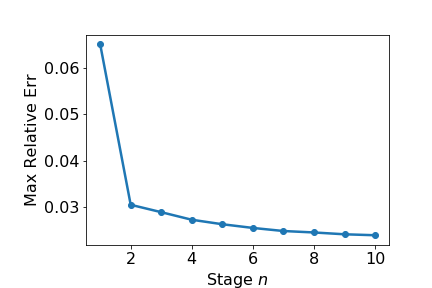}	&	
		\includegraphics[width=0.45\textwidth]{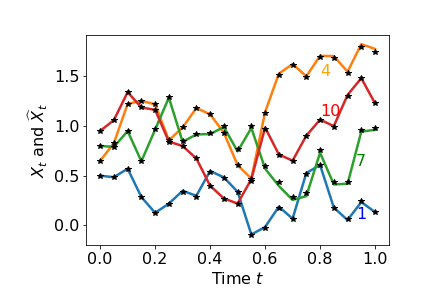}
	\end{tabular}
	\caption{Comparisons of cost functions and optimal trajectories for $N=10$ players in the linear quadratic game. Left: the maximum relative errors of the cost functions for ten players; Right: for a sake of clarity, we only present the comparison of optimal trajectories for the $1^\text{st}$, $4^\text{th}$, $7^\text{th}$ and $10^\text{th}$ players, where the solid lines are given by the closed-form solution and the stars are computed by deep fictitious play. }\label{fig:N10_cost_traj}
\end{figure}

\begin{figure}[H]
	\begin{tabular}{ccc}
		\includegraphics[width=0.3\textwidth]{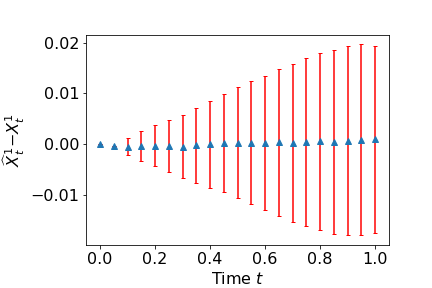} &
		\includegraphics[width=0.3\textwidth]{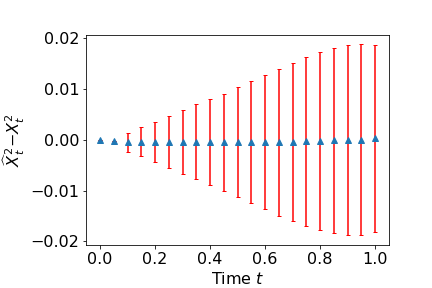}	&	
		\includegraphics[width=0.3\textwidth]{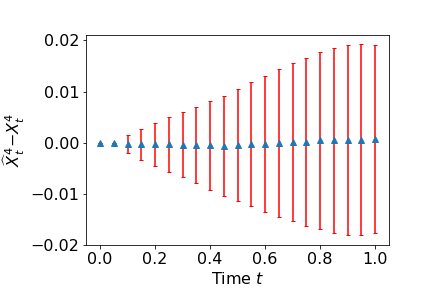} \\
		\includegraphics[width=0.3\textwidth]{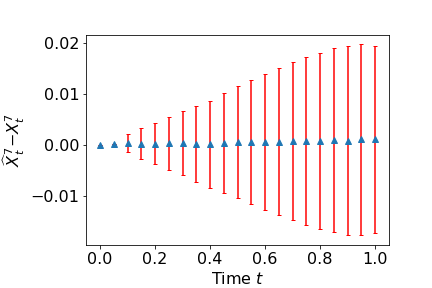} &
		\includegraphics[width=0.3\textwidth]{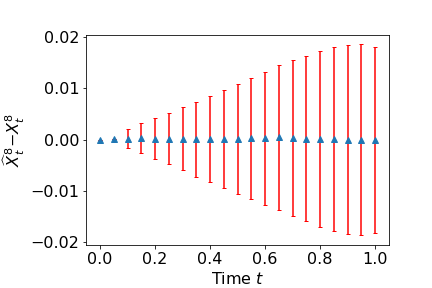} &
		\includegraphics[width=0.3\textwidth]{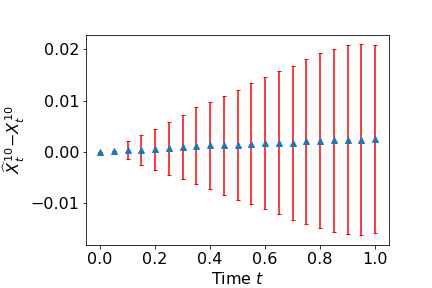} 
	\end{tabular}
	\caption{Comparisons of optimal trajectories for $N=10$ players in the linear quadratic game. For a sake of clarity, we only show the mean (blue triangles) and standard deviation (red bars) of  optimal trajectories errors for the $1^\text{st}$, $2^\text{nd}$, $4^\text{th}$, $7^\text{th}$, $8^\text{th}$ and $10^\text{th}$ player, respectively. The results are based on a total sample of $65536$ paths, and show that deep fictitious play provides a uniformly good accuracy of optimal trajectories.}\label{fig:N10traj}
\end{figure}

\begin{figure}[h t b]
	\begin{tabular}{ccc}
		\includegraphics[width=0.3\textwidth]{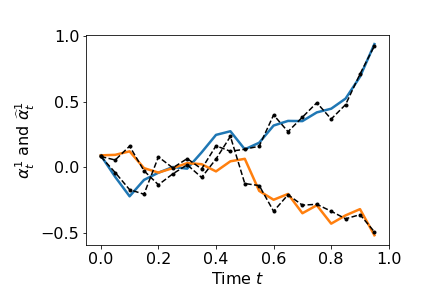} &
		\includegraphics[width=0.3\textwidth]{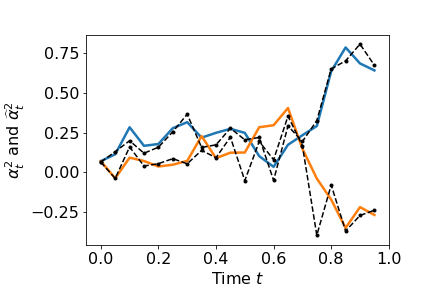}	&	
		\includegraphics[width=0.3\textwidth]{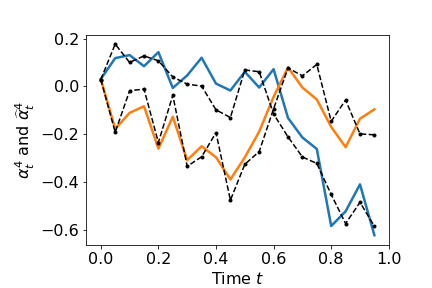} \\
		\includegraphics[width=0.3\textwidth]{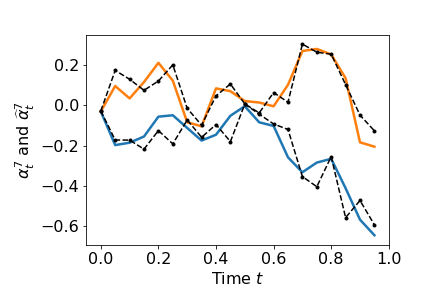} &
		\includegraphics[width=0.3\textwidth]{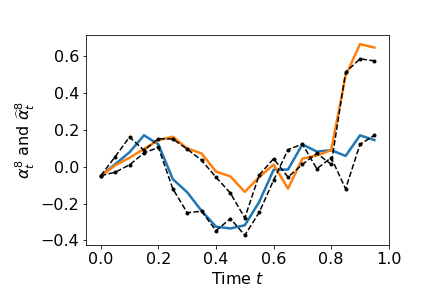} &
		\includegraphics[width=0.3\textwidth]{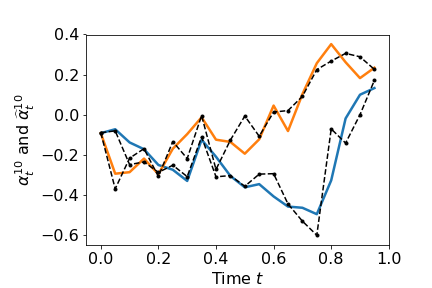} 
	\end{tabular}
	\caption{Comparisons of optimal controls for $N=10$ players in the linear quadratic game. For a sake of clarity, we only show two sample paths of optimal controls for the $1^\text{st}$, $2^\text{nd}$, $4^\text{th}$, $7^\text{th}$, $8^\text{th}$ and $10^\text{th}$ player, respectively. The solid lines are optimal controls given by the closed-form solution, and the dotted dash lines are computed by deep fictitious play. }\label{fig:N10control}
\end{figure}

{\bf Example 3 ($N=24$).} The initial positions for the $i^{th}$ player is $x_0^i = 0.5i$. We set the time steps $N_T = 20$, after observing the relative errors did not increase too much from $N_T = 50$ to $N_T = 20$. The problem by nature is high-dimensional: the $k^\text{th}$ ``Sequential'' subnetwork maps $\RR^{Nk}$ to $\RR$. To accelerate the computation, we distribute the training to 8 GPUs. Similar studies to the $N=10$ case are presented in Figures~\ref{fig:N24:cost:traj}-\ref{fig:N24control}. Some key features that have been observed from previous numerical experiments: the maximum of relative error drops below $3\%$ after ten iterations; the average error of estimated trajectories are convex/concave functions of time $t$; the standard deviation of estimated error aggregates from steps to steps. In fact, the convexity/concavity with respect to time $t$ is caused by two factors: the propagation of errors, which produces an magnitude increase in error mean; and the existence of terminal cost, which puts more weights on $X_T$ than $X_t, t \in (0,T)$, resulting in a better estimate of $X_T$ and a decreasing effect.

\begin{figure}[H]
	\begin{tabular}{cc}
		\includegraphics[width=0.45\textwidth]{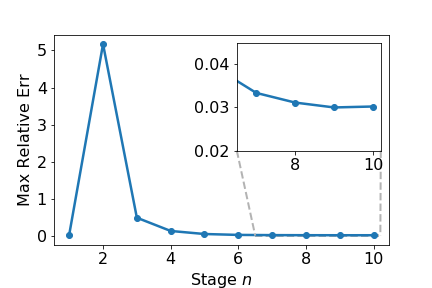}	&	
		\includegraphics[width=0.45\textwidth]{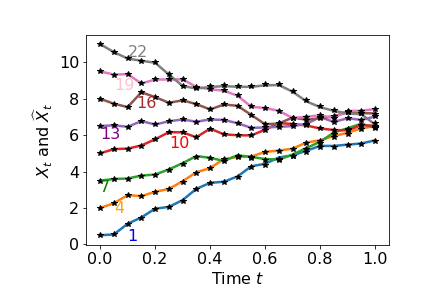}
	\end{tabular}
	\caption{Comparisons of cost functions and optimal trajectories for $N=24$ players in the linear quadratic game. Left: the maximum relative errors of the cost functions for ten players; Right: for a sake of clarity, we only present the comparison of optimal trajectories for the $1^\text{st}$, $4^\text{th}$, $7^\text{th}$, $10^\text{th}$, $13^\text{th}$, $16^\text{th}$, $19^\text{th}$ and $22^\text{th}$ players, where the solid lines are given by the closed-form solution and the stars are computed by deep fictitious play. }\label{fig:N24:cost:traj}
\end{figure}

\begin{figure}[h t b]
	\begin{tabular}{ccc}
		\includegraphics[width=0.3\textwidth]{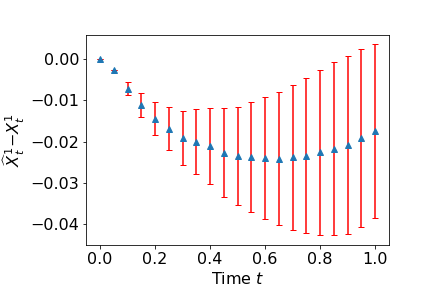} &
		\includegraphics[width=0.3\textwidth]{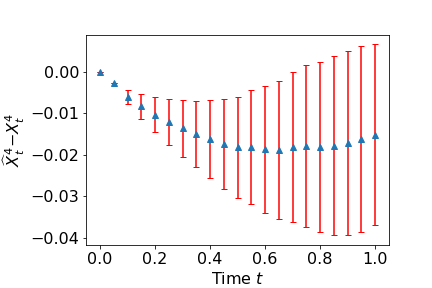}	&	
		\includegraphics[width=0.3\textwidth]{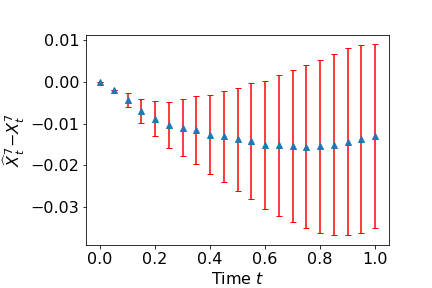} \\
		\includegraphics[width=0.3\textwidth]{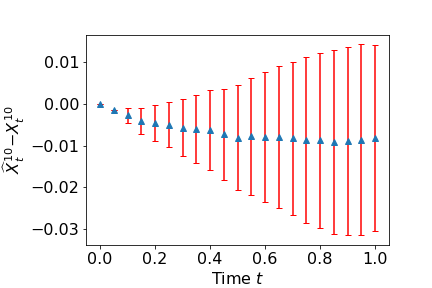} &
		\includegraphics[width=0.3\textwidth]{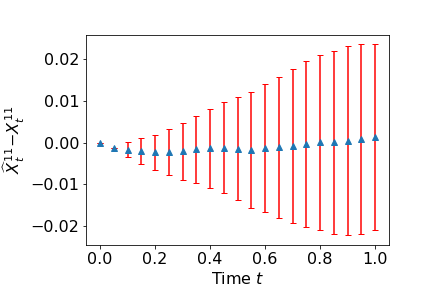} &
		\includegraphics[width=0.3\textwidth]{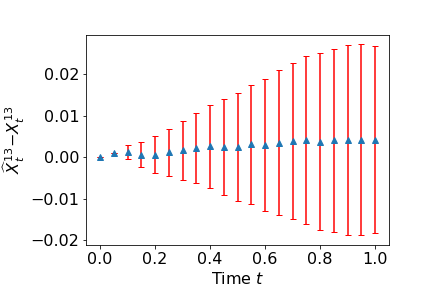} \\
		\includegraphics[width=0.3\textwidth]{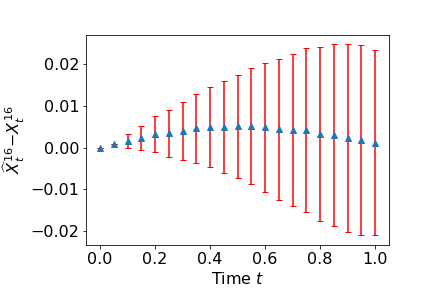} &
		\includegraphics[width=0.3\textwidth]{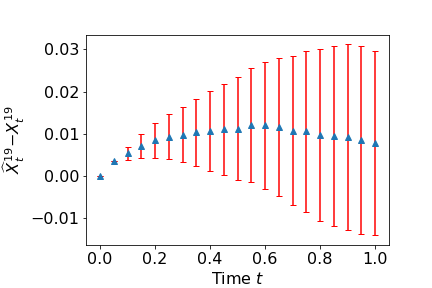} &
		\includegraphics[width=0.3\textwidth]{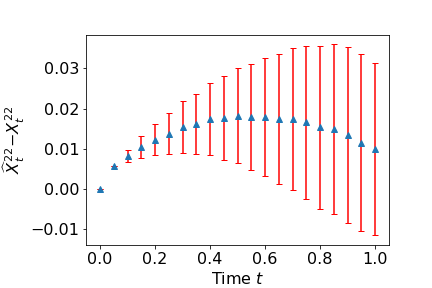} \\
	\end{tabular}
	\caption{Comparisons of optimal trajectories for $N=24$ players in the linear quadratic game. For a sake of clarity, we only show the mean (blue triangles) and standard deviation (red bars) of optimal trajectories errors for the $1^\text{st}$, $4^\text{th}$, $7^\text{th}$, $10^\text{th}$, $11^\text{th}$, $13^\text{th}$, $16^\text{th}$, $19^\text{th}$ and $22^\text{th}$ player, respectively. The results are based on a total sample of $65536$ paths,  show that deep fictitious play provides a uniformly good accuracy of optimal trajectories.}\label{fig:N24traj}
\end{figure}

\begin{figure}[h t b]
	\begin{tabular}{ccc}
		\includegraphics[width=0.3\textwidth]{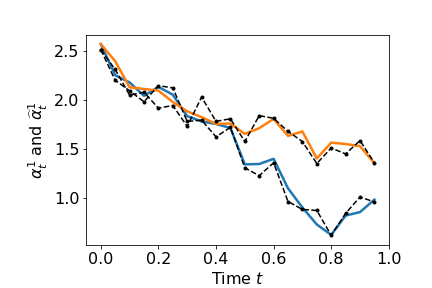} &
		\includegraphics[width=0.3\textwidth]{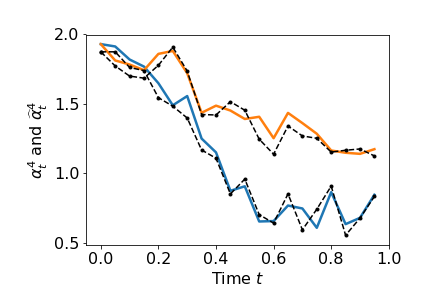}	&	
		\includegraphics[width=0.3\textwidth]{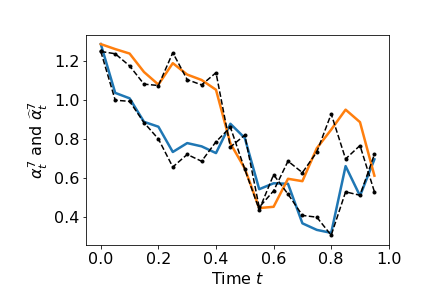} \\
		\includegraphics[width=0.3\textwidth]{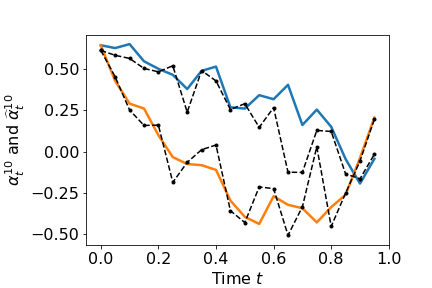} &
		\includegraphics[width=0.3\textwidth]{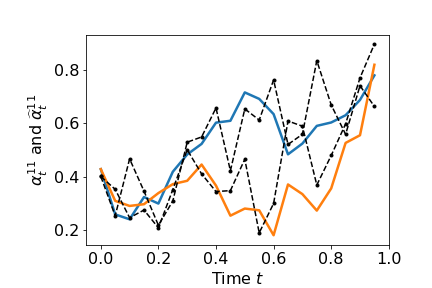} &
		\includegraphics[width=0.3\textwidth]{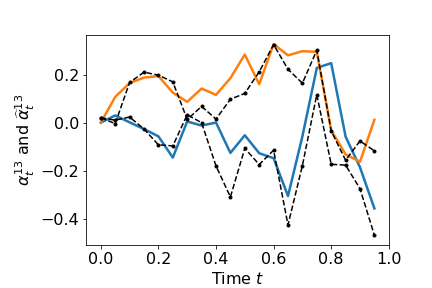} \\
		\includegraphics[width=0.3\textwidth]{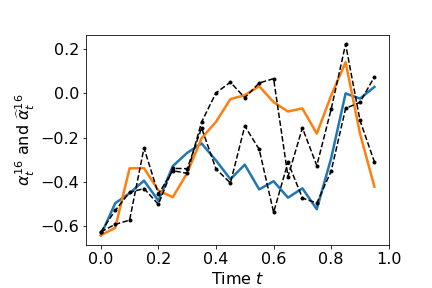} &
		\includegraphics[width=0.3\textwidth]{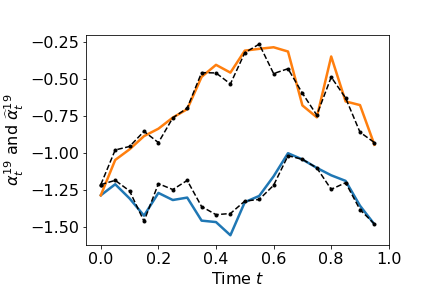} &
		\includegraphics[width=0.3\textwidth]{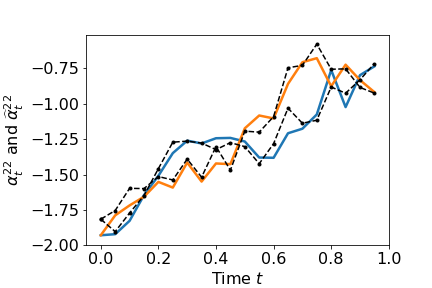} 
	\end{tabular}
	\caption{Comparisons of optimal controls for $N=24$ players in the linear quadratic game. For a sake of clarity, we only show two sample paths of optimal controls for the $1^\text{st}$, $4^\text{th}$, $7^\text{th}$, $10^\text{th}$, $11^\text{th}$, $13^\text{th}$, $16^\text{th}$, $19^\text{th}$ and $22^\text{th}$ player, respectively. The solid lines are optimal controls given by the closed-form solution, and the dotted dash lines are computed by deep fictitious play. }\label{fig:N24control}
\end{figure}

To better illustrate that our algorithm can overcome the curse of dimensionality, we compare the performance across different $N$. Particularly, we compute $$\max_{i \in \mc{I}}\max_{k \leq N_T}\abs{X_{kh}^i - \widehat X_{kh}^i}$$
where $X$ denotes the state process following the open-loop Nash equilibrium, while $\widehat X$ is the deep fictitious play counterpart. 
The $L^1$ error is $1.09\times 10^{-2}$ for $N = 5$, $1.49\times 10^{-2}$ for $N = 10$ and $2.08\times 10^{-2}$ for $N = 24$. Table~\ref{tab:parameter} gives the running time and other hyper-parameters used in the numerical examples.

\begin{table}[h]
	\caption{Hyperparameters and runtime for the numerical examples presented in Section 4.}\label{tab:parameter}
	\begin{center}
		~\newline 
		\begin{tabular}{@{}c|ccc@{}}
			\toprule
			Problem & N = 5    & N = 10 & N = 24 \\ \midrule
			$N_T$       &  50 &  20 &  20   \\
	%		width of hidden layers &  8+8 &  8+8 &  8+8   \\
	%		$M$ (\# of total stages)       &  10 &  10 &  10  \\
	%		$N_\text{batch}$ &  1024 &  1024 &  1024  \\
		Max Relative Err & 1.15\% &2.45\% & 2.95\%\\
			\# of GPUs used & 1 & 1 & 8 \\
			runtime (hours) $^\dagger$ & 2.15 & 14.03 & 12.10 \\
			$L^1$ error of $\widehat X$ & 1.09e-2 & 1.49e-2 & 2.08e-2 \\
			\bottomrule
		\end{tabular}
	\end{center}
	\small{
		$^\dagger$ The numerical experiments were conducted using Amazon EC2 services with P2 instances. We remark that the runtime is subject to further reduction with a multi-GPU system or more efficient GPUs.
	}
\end{table}

 \section{Conclusion, discussion and extension}\label{sec:rmk}
 
 In this paper, the deep fictitious play theory is proposed to compute the Nash equilibrium of asymmetric $N$-player non-zero-sum stochastic differential games. We apply the strategy of fictitious play by letting individual player optimize her own payoff while fixing the control of the other players at each stage, and then repeat the game until their responses do not change too much from stage to stage. Finding the best response for each player at each stage is a stochastic optimal control problem, which we approximate by deep neural networks (DNNs). By the nature of open-loop strategies, the problem is recasted into repeated training of $N$ decoupled neural networks (NNs),  where inputs of each NN depend on the other NNs' outputs from previous training. Using Keras and parallel GPU simulation, the deep learning algorithm can be applied to any $N$-player stochastic differential game with different symmetries and heterogeneities. The numerical accuracy and efficiency is illustrated by comparing to the closed-form solution of the linear quadratic case. We also prove the convergence of fictitious play under appropriate assumptions, and show that the convergent limit forms an open-loop Nash equilibrium. We remark that the implementation of this algorithm causes no extra difficulties beyond the linear-quadratic game, but the verification of convergence to the true equilibrium is in general hard due to the lack of benchmark solution. Although one may observe the convergence of the proposed algorithm by tracking the relative change of cost (cf. Step~\ref{algo:step} in Algorithm~\ref{def:algorithm}), it may actually be trapped in a local (but not true) equilibrium. 
 
 In the following, we shall discuss the extensions to other neural network architectures, other strategies of fictitious play and closed-loop Nash equilibrium.

\subsection{Other neural network architectures}
	In the open-loop framework, the searching space for optimal policies contains all $\MCF_t$-progressively measurable processes, which possesses a path-dependent feature. When using a feedforward architecture, in order to better capture this feature, one needs to partition $[0,T]$ into a sufficiently large number of $N_T$ intervals. Then, a sub-network is used to approximate the optimal policy at each time point \eqref{def:strategy:approx}, whose size becomes larger as the time approaches the terminal time $T$ since more history needs to be fed as input. Therefore, the training time increases significantly when one uses large $N_T$. To improve the performance, architectures based on recurrent neural networks can be considered in solving the stochastic control problem \eqref{def:sc1}--\eqref{def:sc2}, for example, using long short-term memory (LSTM),  gated recurrent units (GRUs), {\it etc.} This will be part of our future work \cite{HaHu:20}.

\subsection{Belief based on time average of past play} \label{sec:avg}
In the formulation \eqref{def:SFP}, players' belief is based on their actions during last round, i.e. at stage $n+1$, players myopically respond to their opponents' policies at stage $n$ without considering all decisions before $n$. This is in fact a bit discrepant from Brown's definition \cite{Br:49,Br:51}, where players responses take into account all past policies. Denote by $\bm{\widetilde \alpha}^{-i,n}$ is the weighted average of past play,
\begin{align}\label{def:SFP:avg}
\bm{\widetilde \alpha}^{-i,n} = \frac{1}{n}\sum_{k=1}^n \bm{\alpha}^{-i,k},
\end{align}
then Brown's original idea corresponds to the control problem:
\begin{align}
\alpha^{i,n+1} := \argmin_{\beta^i \in \mbA} J^i (\beta^i; \widetilde{\bm{\alpha}}^{-i,n}), \quad \forall i \in \mc{I}, n \in \NN.
\end{align}
where $J^i$ is defined as in \eqref{def:J:SFP}.

In general, convergence in the strategy $\bm{\alpha}^n$ implies convergence in the average of past play $\widetilde {\bm{\alpha}}^n$, but not vice versa. Therefore, convergence in $\widetilde{\bm{\alpha}}^n$ does not necessarily lead to a Nash equilibrium. Our numerical tests show that, if the algorithm converges in $\bm\alpha^n$, then using $\widetilde {\bm{\alpha}}^n$ tends to give a better rate for linear quadratic cases. In practice, within the framework of deep fictitious play, one can generalize \eqref{def:SFP:avg} to any weighted average of past policies: $\sum_{k=0}^n c_k\bm{\alpha}^{-i,k}$, where $(c_k)_{k=0}^n$ is a $n$-simplex with $c_n >0$. We plan to further investigate the comparison between different beliefs for practical problems in future.

\subsection{ Belief updated alternatively} 
We shall also mention that, there are actually two versions of fictitious play, the alternating fictitious play (AFP), originally invented in \cite{Br:49}, and the simultaneous fictitious play (SFP) mentioned as a minor variant of AFP in \cite{Br:49}. In contrast to \eqref{def:SFP}, the players under AFP update their beliefs alternatively. For example, in the case $N=2$, the learning process is:
\begin{align}
\alpha^{1,n+1} &:= \argmin_{\beta^1 \in \mbA} J^1(\beta^1; \alpha^{2,n}),  \quad \alpha^{2,n} := \argmin_{\beta^2 \in \mbA} J^2(\beta^2; \alpha^{1,n}),  \quad n \geq 1,
\end{align}
and the computation follows $\alpha^{2,0}$(initial belief) $\rightarrow \alpha^{1,1}\rightarrow \alpha^{2,1}\rightarrow \alpha^{1,2}\rightarrow \alpha^{2,2}\rightarrow \ldots$. The dependence of $\alpha^{2,n}$ on $\alpha^{1,n}$ makes one not able to update them simultaneously, which is the main difference from SFP. 

Indeed, SFP can be considered as a simpler learning process than AFP, as players are treated symmetrically in time. This usually enhances analytical convenience as well as numerical efficiency (with possible parallel implementation in Step 5-9 of Algorithm~\ref{def:algorithm}). Gradually, the original AFP seems to disappear from the literature, and people focus on SFP, even though SFP may generate subtle problems which do not arise under AFP. For a comparison study, we refer to \cite{Be:07}, where they also related this subtly to Monderer and Sela's {Improvement Principle} \cite{MoSe:97}. We focused on SFP in this paper, where the beliefs can be updated in parallel, and leave the AFP learning process for future studies.

\subsection{ The algorithm for closed-loop Nash equilibrium}\label{sec:close:loop} Depending on the space we search for $\beta^i$ in \eqref{def:SFP}, the algorithm can lead to a Nash equilibrium in different setting. Indeed, if consider $[0,T] \times (\RR^d)^N \ni  (t, \bm{x}) \to \beta^{i} \in A \subset \RR^k$ as a function of current states, then the limit yields a feedback strategy for Nash equilibrium. Mathematically, 
\begin{equation}\label{def:SFP:cl}
\alpha^{i,n+1}(t,\bm{x}) := \argmin_{\beta^i(t,\bm{x}) \in A} J^i(\beta^i(X_t^{i, \beta^i}, \bm{X}_t^{-i, \bm{\alpha}^{-i,n}}); \bm{\alpha}^{-i,n}(X_t^{i, \beta^i}, \bm{X}_t^{-i, \bm{\alpha}^{-i,n}})),
\end{equation}
where $\bm{X}_t^{-i, \bm{\alpha}^{-i,n}}$ represents players $j \neq i$ state processes following policies $\bm{\alpha}^{-i,n}$. 

This setup can be analyzed by the the partial differential equation (PDE) approach. Assuming enough regularity, the minimal cost can be reformulated as the classical solution to HJB equation where others' strategies are given by deterministic functions obtained from previous round. Consequently, at each stage, the task is to solve $N$ independent HJB equations, which can still be implemented in parallel. Moreover, if the players are statistically identical, one actually only needs to solve one PDE. Denote by $V^{i,n+1}(t,\bm{x})$ the value function of problem \eqref{def:SFP:cl} at time $t$ with initial states $\bm{X}_t = \bm{x}$, by dynamic programming, it satisfies
\begin{align}
\partial_t V^{i,n+1} &+ \inf_\beta\Bigg\{b^i(t,\bm{x}, \beta)\partial_{x^i}V^{i,n+1} + f^i(t,\bm{x}, \beta) + \half \text{Tr}\left[ \partial^2_{x^i,x^i}V^{i,n+1}\sigma^i(t,\bm{x},\beta)\sigma^i(t,\bm{x},\beta)^\dagger\right] \\
&+ \sum_{\substack{j = 1\\ j \neq i}}^N \text{Tr}\left[\partial^2_{x^i,x^j}V^{i,n+1} \sigma^i(t,\bm{x}, \beta) \Sigma^{i,j} \sigma^j(t,\bm{x}, \alpha^{j,n})^\dagger\right]\Bigg\}
+ \sum_{\substack{j=1 \\ j\neq i}}^N b^j(t,\bm{x}, \alpha^{j,n})\partial_{x^j}V^{i,n+1}  \\
&+ \half \sum_{\substack{j,k = 1\\ j\neq i\\k \neq i}}^N \text{Tr}\left[\partial^2_{x^j,x^k}V^{i,n+1} \sigma^j(t,\bm{x}, \alpha^{j,n}) \Sigma^{j,k}\sigma^k(t,\bm{x}, \alpha^{k,n})^\dagger\right]= 0, \\
\alpha^{i,n} \equiv \alpha&^{i,n}(t,\bm{x}) := \argmin_{\beta \in A}\left\{b^i(t,\bm{x}, \beta)\partial_{x^i}V^{i,n} + f^i(t,\bm{x}, \beta)\right\}, \quad \Sigma^{j,k} \ud t := \ud \average{W^j, W^k}_t.
\end{align}
Then, numerically, one can design traditional finite difference/element methods, or use deep learning which has been shown excellent performance in overcoming the curse of dimensionality in high-dimensional PDEs \cite{EHaJe:17,HaJeE:18}. After all, the optimal response function $\alpha^{i,n+1}$ is given in terms of $\partial_{x^i}V^{i,n+1}, \partial^2_{x^i, x^j}V^{i,n+1}$. However, a common drawback of working on the value function $J^i$ is that numerical schemes usually well approximate the solution but not the derivative of the solution, which is more sensitive. 

An alternative way is to work directly on the control.  By a stochastic maximum principle argument, the optimal control is linked to the solution (not the derivative) of FBSDEs, see, {\it e.g.}, \cite[Section~2.2]{CaDe1:17}. Then it is promising to apply the recent deep learning algorithm for the coupled FBSDEs \cite{HaLo:18}. In this case, at each stage, the task is to solve $N$ independent FBSDEs and parallel implementation is still possible.

Both approaches rely on the property of the reformulated problem: the solution's regularity in the PDE approach and the Hamiltonian's convexity in the FBSDEs approach. A third possibility is to work with the optimization \eqref{def:SFP:cl} directly as we do in the open-loop case. That is, using the deep NN to approximate the control and find the optimal parameters that minimize \eqref{def:SFP:cl}. However, due to the feedback reaction, the Algorithm~\ref{def:algorithm} and  architectures proposed in Section~\ref{sec:algorithm} are no longer suitable. It is this ``indirect'' reaction nature of the open-loop strategy that enables us to design $N$ separate NNs and a scalable algorithm. While working with feedback controls,  the realized opponents' strategies $\bm{\alpha}^{-i,n}(t, \bm{X}_t)$ depend on $\beta^i$. Further explained by Figure~\ref{fig:algorithm}, this means that,  $\bm\alpha_1^{-i}$, previously considered as intermediate outputs from NNs of other players at previous training, now depends on $\beta_0^i$ through $X_1^{i}$. Consequently, to take into account the direct reaction of her opponents, one needs to feed $\beta_0^i$ to player $j^{th}$ NN, $j \neq i$ for intermediate output $\bm\alpha_1^{-i}$. This makes the $N$-neural networks coupled with each other, and hard to implement in parallel.  

Apparently, using deep fictitious play for Markovian Nash equilibrium is not a simple modification of Algorithm~\ref{def:algorithm}, and two of the three aforementioned approaches (PDE and direct) are studied in the follow-up works \cite{HaHu:19,HaHuLo:20}.

\section*{Acknowledgment}
I am grateful to Professor Marcel Nutz for the stimulating and fruitful discussions on fictitious play and convergence of linear quadratic case.

\bibliographystyle{plain}
\bibliography{Reference}

\end{document}